\DeclareMathOperator{\GL}{GL}
\DeclareMathOperator{\Sel}{Sel}
\DeclareMathOperator{\Img}{Im}
\DeclareMathOperator{\Res}{Res}
\newcommand{\Q}{{\mathbb Q}}
\newcommand{\Z}{{\mathbb Z}}
\newcommand{\F}{{\mathbb F}}
\newcommand{\PP}{{\mathbb P}}
\newcommand{\cA}{\mathcal{A}}
\newcommand{\cB}{\mathcal{B}}
\newcommand{\cS}{{\mathcal{S}}}
\newcommand{\OO}{{\mathcal O}}
\newcommand{\fp}{\mathfrak{p}}
\newcommand{\Qbar}{\bar{\Q}}
\newcommand{\GalQ}{{\mathcal G}}
\newcommand{\Map}{\operatorname{Map}}
\newcommand{\To}{\longrightarrow}
\newcommand{\fake}{{\text{\rm fake}}}
\newcommand{\mult}{^\times}
\newcommand{\id}{\operatorname{id}}
\newcommand{\pr}{\operatorname{pr}}
\newcounter{nootje}
\newtheorem{thm}{Theorem}[section]
\newtheorem{lem}[thm]{Lemma}
\theoremstyle{definition}
\theoremstyle{remark}
\begin{document}

\title[Partial Descent on Hyperelliptic Curves]%
      {Partial Descent on Hyperelliptic Curves \\
       and the Generalized Fermat Equation $x^3+y^4+z^5=0$}
\author{Samir Siksek}
\address{Mathematics Institute\\
	University of Warwick\\
	Coventry\\
	CV4 7AL \\
	United Kingdom}

\email{s.siksek@warwick.ac.uk}

\author{Michael Stoll}
\address{Mathematisches Institut,
         Universit\"at Bayreuth,
         95440 Bayreuth, Germany.}
\email{Michael.Stoll@uni-bayreuth.de}

\date{\today}
\thanks{The first-named 
author is supported by an EPSRC Leadership Fellowship.}

\keywords{Hyperelliptic curves, descent,
Fermat-Catalan, generalized Fermat equation, Selmer set}
\subjclass[2000]{Primary 11G30, Secondary 11G35, 14K20, 14C20}

\begin {abstract}
Let $C : y^2=f(x)$ be a hyperelliptic curve defined over $\Q$.
Let $K$ be a number field and suppose $f$ factors over $K$
as a product of irreducible polynomials $f=f_1 f_2 \dots f_r$.
We shall define a \lq\lq Selmer set\rq\rq\ 
corresponding to this factorization
with the property that if it is empty then $C(\Q)=\emptyset$.
We shall demonstrate the effectiveness of our new method
by solving the generalized Fermat equation with signature $(3,4,5)$,
which is unassailable via the previously existing methods.
\end {abstract}
\maketitle


\section{Introduction}

Let $f$ be a separable polynomial with coefficients in $\Z$ and 
degree $d \geq 3$. Let
$C$ be the non-singular projective hyperelliptic
curve with affine patch
\[
C : y^2=f(x).
\] 
One is interested in studying the set of rational points on $C(\Q)$
and, in particular, deciding whether $C(\Q)$ is empty or not. 
Several techniques have been developed to attack this problem 
\cite{BF}, \cite{BS},
\cite{BS2}, \cite{BS3}.
The easiest general method is the \lq\lq two-cover descent\rq\rq\
of Bruin and Stoll \cite{BS2}. Let 
$L$ be the \'{e}tale algebra $L=\Q[x]/f$;
this algebra is the direct sum of number fields. Bruin and Stoll
define a map from $C(\Q)$ to a group $H$ which is 
either $L\mult/(L\mult)^2$ or
$L\mult/(\Q\mult (L\mult)^2)$ depending on the parity of~$d$.\footnote{%
$R\mult$ denotes the multiplicative group of an algebra~$R$, and $(R\mult)^2$ denotes
the subgroup of squares.}
They show how to compute a finite subset of $H$,
which they call the \lq\lq fake 2-Selmer set\rq\rq, that contains
the image of $C(\Q)$ in $H$. If this fake 2-Selmer set is empty
then so is $C(\Q)$. 

The computation of the fake Selmer set requires  
explicit knowledge of the class and unit groups
of the number fields that are the direct summands of $L$.
If these number fields have large degrees or discriminants,
then this computation is impractical.
In this paper, we look at the situation where there is
a number field $K$ of relatively small degree such that
$f$ factors over $K$ into a product $f=f_1 f_2 \cdots f_r$
of irreducible factors. We define an appropriate 
\lq\lq Selmer set\rq\rq\ whose computation demands
the knowledge of the class group and units of $K$ but
not of larger number fields. This can be helpful either in
proving the non-existence of rational points on our
hyperelliptic curve, or in the construction of unramified
covers that can help in the determination of the set
of rational points. We call our method \lq\lq partial descent\rq\rq\
as the information it yields is usually weaker than the information
given by the fake 2-Selmer set. We explain this in more detail
in Section~\ref{S:pdesc} below

We shall demonstrate the effectiveness of our new method
by solving the generalized Fermat equation with signature $(3,4,5)$.


Let $p$, $q$, $r \in \Z_{\geq 2}$. The equation
\begin{equation}\label{eqn:FCgen}
x^p+y^q=z^r
\end{equation}
is known as the \emph{generalized Fermat equation}
(or the \emph{Fermat-Catalan} equation) with signature $(p,q,r)$.
As in Fermat's Last Theorem, one is interested in non-trivial
primitive integer solutions.
An integer solution $(x,y,z)$ is said to be {\em non-trivial} if
$xyz \neq 0$, and {\em primitive} if $x$, $y$, $z$ are coprime.
Let $\chi=p^{-1}+q^{-1}+r^{-1}$. The parametrization
of non-trivial primitive integer solutions for $(p,q,r)$ with
$\chi \geq 1$ has now been completed (\cite{Be}, \cite{Ed}).
The generalized Fermat Conjecture \cite{Da97}, \cite{DG}
is concerned with the case $\chi<1$.
It states that---up to sign and permutation---the only non-trivial 
primitive solutions to
\eqref{eqn:FCgen} with $\chi<1$ are 
\begin{gather*}
1+2^3 = 3^2, \quad 2^5+7^2 = 3^4, \quad 7^3+13^2 = 2^9, \quad
2^7+17^3 = 71^2, \\
3^5+11^4 = 122^2, \quad 17^7+76271^3 = 21063928^2, \quad
1414^3+2213459^2 = 65^7, \\
9262^3+15312283^2 = 113^7, \quad
43^8+96222^3 = 30042907^2, \quad 33^8+1549034^2 = 15613^3.
\end{gather*}
The generalized Fermat Conjecture
has been established for many signatures $(p,q,r)$,
including for several infinite families of signatures:
Fermat's Last Theorem $(p,p,p)$ by
Wiles and Taylor \cite{W}, \cite{TW};
$(p,p,2)$ and $(p,p,3)$ by Darmon and Merel \cite{DM};
$(2,4,p)$ by Ellenberg \cite{El} and Bennett, Ellenberg and Ng
\cite{BEN};
$(2p,2p,5)$ by Bennett \cite{Ben}.
Recently, Chen and Siksek \cite{ChenS} 
have solved the generalized Fermat equation
with signatures $(3,3,p)$ for a set of prime exponents $p$ having 
Dirichlet density $28219/44928$.
For exhaustive surveys see \cite[Chapter 14]{Cohen} and \cite{Be}. An older but
still very useful survey is \cite{Kr99}.
An up-to-date list of partial results with references is found in \cite{PSS}.
It appears that the `smallest' signature $(p,q,r)$ for which the
equation has not yet been resolved is $(3,4,5)$, in the sense that it is
the 
only 
signature with $\max\{p,q,r\} \le 5$ that is still open.
In this paper we shall prove the following theorem.
\begin{thm}\label{thm:main}
The only primitive integer solutions to the equation
\begin{equation}\label{eqn:main}
x^3+y^4+z^5=0
\end{equation}
are the trivial solutions $(0,\pm 1, -1)$, $(-1,0,1)$, $(1,0,-1)$,
$(-1,\pm 1,0)$.
\end{thm}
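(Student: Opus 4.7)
The plan is to reduce Theorem~\ref{thm:main} to a finite list of statements of the form \emph{``a specific hyperelliptic curve has no rational points apart from those coming from trivial solutions''}, and then to dispatch each such curve using the partial descent method that is the subject of the paper. The reduction begins by setting $w=y^2$, which transforms~\eqref{eqn:main} into
\[
  w^2 + x^3 + z^5 = 0,
\]
a generalized Fermat equation with signature $(2,3,5)$, so $\chi=31/30>1$. A short $\gcd$-argument shows that if $(x,y,z)$ is primitive for~\eqref{eqn:main} then $(y^2,x,z)$ is primitive for the new equation: any prime dividing two of $y^2$, $x$, $z$ would divide two of $x,y,z$ and hence, by~\eqref{eqn:main}, all three, contradicting the hypothesis.

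By the theorem of Edwards~\cite{Ed}, every primitive integer solution of the $(2,3,5)$-equation lies in the image of one of a finite list of polynomial parametrizations
\[
  (w,x,z) = \bigl(W_i(s,t),\,X_i(s,t),\,Z_i(s,t)\bigr), \qquad i=1,\dots,N,
\]
where $(s,t)$ ranges over coprime pairs of integers. Imposing the extra condition that $w=y^2$ be a square then amounts to requiring $y^2=W_i(s,t)$ for some $i$; dehomogenising, this defines a hyperelliptic curve
\[
  C_i \colon y^2 = f_i(x)
\]
over $\Q$, with $f_i(x)=W_i(x,1)$ (up to absorbing a square factor in $y$). Every primitive solution of~\eqref{eqn:main} thus gives a rational point on some $C_i$, so the problem is reduced to showing that $C_i(\Q)$ is accounted for by the trivial solutions of~\eqref{eqn:main} for every $i$.

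To carry out the verification on each $C_i$, I would invoke the partial descent of Section~\ref{S:pdesc}. Concretely: one chooses a number field $K_i$ of manageable degree over which $f_i$ acquires a useful factorization $f_i=f_{i,1}\cdots f_{i,r_i}$, forms the associated Selmer set in the sense of Section~\ref{S:pdesc}, and shows that it is empty (or that the only classes it contains correspond to the known trivial points). The decisive practical advantage over running a full fake $2$-descent on $f_i$ over the \'etale algebra $\Q[x]/f_i$ is that only the class and unit groups of the (single, small) field $K_i$ are required, rather than of the typically much larger summand fields of $\Q[x]/f_i$.

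The principal obstacle I foresee is that Edwards' parametrizations produce polynomials $W_i$ of relatively high degree with large Galois groups, so a straight fake $2$-descent is out of reach; the whole argument hinges on finding, for each $i$, a field $K_i$ of low degree over which $f_i$ splits just enough for the Selmer set to be forced empty by local conditions at $2$, $3$, $5$ and the primes dividing the discriminant of $f_i$. These local computations certifying emptiness are what I expect to constitute the bulk of the technical work, and a Mordell--Weil-type sieve using auxiliary primes may be needed to finish off the stubborn curves. Once the descent succeeds on each $C_i$, Theorem~\ref{thm:main} follows.
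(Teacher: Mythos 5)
Your reduction is exactly the paper's: substituting $y^2$ for the square variable in the $(2,3,5)$ equation, invoking Edwards' parametrization, and imposing the square condition to get a finite list of hyperelliptic curves $y^2=f_i(u,v)$ (in the paper there are $49$ of them, of degree $30$ and genus $14$), and the use of partial descent over a small auxiliary field (in the paper, degree-$5$ subfields of the splitting field, available because the Galois group is $\GL_2(\F_5)/\{\pm I\}$) is also the intended tool for the curves with no rational points. However, there is a genuine gap in your endgame. Several of the curves $C_i$ \emph{do} have rational points --- the trivial solutions of~\eqref{eqn:main} live on them --- so for those curves the ($Y$-)Selmer set cannot be empty, and your parenthetical fallback ``the only classes it contains correspond to the known trivial points'' does not finish the argument: a Selmer class is a coset-level invariant, and knowing that every rational point maps to the single surviving class in no way implies that the known points are the only rational points. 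By Theorem~\ref{thm:genpd}, what the surviving class gives you is that $C_i(\Q)$ is the image of $D_{\beta,s,Y}(\Q)$ for the corresponding unramified covering $D_{\beta,s,Y}$, so the remaining (and in the paper substantial) task is to determine the rational points on these covering curves. The paper does this by exhibiting, for each surviving cover, a quotient map to a genus-$1$ curve whose Jacobian has rank $0$, or to a genus-$2$ curve whose Jacobian has rank at most $1$ (handled by Chabauty), e.g.\ for $f_2=F_1F_2$ the cover $F_1=y_1^2$, $F_2=y_2^2$ maps to $Y^2=X^5+20736$, whose Jacobian has Mordell--Weil group $\Z/5\Z$.

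Your suggested substitute for this step, a ``Mordell--Weil-type sieve using auxiliary primes,'' is not workable as stated: the Mordell--Weil sieve on $C_i$ itself would require generators of the Mordell--Weil group of a genus-$14$ Jacobian, which is far out of reach (that inaccessibility is the very reason the paper resorts to partial descent in the first place). A secondary, smaller omission: some mechanism is needed to handle the curves that are everywhere locally solvable but whose points correspond only to non-primitive solutions; the paper eliminates $26$ of the $49$ curves by local solubility tests and a coprimality argument modulo $256$, and for the remaining covers it discards images such as $(-429981696,\pm 2985984,0)$ precisely because they are imprimitive --- your write-up should make explicit where this primitivity filtering happens, since the curves themselves do not encode it.
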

Our proof proceeds as follows. Edwards parametrized the primitive
solutions to the generalized Fermat equation $x^2+y^3+z^5=0$.
Using this parametrization, we reduce the resolution of \eqref{eqn:main}
to the determination of the set of rational points on $49$ 
hyperelliptic curves $C_i : y^2=f_i(x)$
where the polynomial $f_i$ has degree $30$ (and so the curves
are of genus~$14$). 
Of these $49$ we can eliminate $26$ using local considerations,
which leaves $23$ curves $C_i$.
These include $13$ \lq difficult\rq\
curves where the $f_i$ are irreducible.
An application of the method of Bruin and Stoll would require the computation of the
class and unit groups of number fields of degree $30$, which is
impractical at present. However, in these
$13$ cases there are number fields $K_i$ of degree $5$ such
that $f_i$ factors over~$K_i$; we shall 
show that an appropriate Selmer set corresponding to this
factorization is empty, and deduce that $C_i(\Q)=\emptyset$.
For the ten remaining cases, our Selmer sets are non-empty, but
we use them to construct unramified covers of the $C_i$.
It turns out that these unramified covers have low genus quotients
for which it is easy to determine the rational points. In this way
we can first determine the set of rational points on each of
the unramified covers and then on the remaining curves~$C_i$.

We are grateful to Don Zagier for suggesting to us that
\eqref{eqn:main} is the \lq next case\rq\ of the generalized
Fermat conjecture. The first-named author would like
to thank John Cremona, Sander Dahmen 
and Michael Mourao for helpful discussions. 


\section{Partial Descent on Hyperelliptic Curves} \label{S:pdesc}

We will use $\GalQ$ to denote the absolute Galois group of~$\Q$.

It is convenient to adopt homogeneous coordinates for hyperelliptic curves.
Let $f(u,v)$ be a squarefree binary form of even degree $2d \ge 4$
with rational integer coefficients. Let
\begin{equation}\label{eqn:Cmodel}
  C : y^2 = f(u,v)
\end{equation}
be the hyperelliptic curve of genus $g = d-1$
associated to~$f$ in weighted projective space~$\PP_{(1,1,d)}$,
where $u$, $v$, $y$ are respectively given weights
$1$, $1$ and~$d$. Then $C$ is covered by the two affine curves
$y^2 = f(x,1)$ and $y^2 = f(1,x)$. To avoid having to deal with
special cases, we will assume that $f(1,0) \neq 0$ (so that
$f(x,1)$ has degree~$2d$). If $f(1,0) = 0$, then we have to work
with an extra factor~$v$ in the factorisation of~$f$ below, and
everything goes through in much the same way.

Over~$\Qbar$, we can write
\[ f(u,v) = c (u - \theta_1 v) \cdots (u - \theta_{2d} v) \]
where $c \in \Q\mult$ is the leading coefficient of~$f(x,1)$ and the
$\theta_j \in \Qbar$ are the (pairwise distinct) roots of~$f(x,1)$.
Let $\Theta = \{\theta_1, \ldots, \theta_{2d}\}$; this is a set
on which the absolute Galois group of~$\Q$ acts. Denote by
\[ L = \Map_\Q(\Theta, \Qbar) \cong \Q[T]/(f(T,1)) \]
the corresponding \'etale algebra (its elements are Galois-equivariant
maps from $\Theta$ to~$\Qbar$). This algebra~$L$ decomposes as a
product of number fields corresponding to the Galois orbits on~$\Theta$,
or equivalently, to the irreducible factors of~$f$ in~$\Q[u,v]$.
Given elements $\alpha \in L\mult$ and $s \in \Q\mult$
such that $c N_{L/\Q}(\alpha) = s^2$, we can define
a curve $D_{\alpha,s} \subset \PP^{2d-1} \times C$ by declaring that
\begin{align*}
  \bigl((z_1 &: \ldots : z_{2d}), (u : v : y)\bigr) \in D_{\alpha,s} \\
     &\iff \exists a \neq 0 \text{\ such that } \forall 1 \le j \le 2d :
            \alpha(T_j) z_j^2 = a (u - \theta_j v)
            \text{\ and\ } s z_1 \cdots z_{2d} = a^d y \,.
\end{align*}
(We consider $\alpha \in L$ as a map $\alpha : \Theta \to \Qbar$.)
It is clear that the condition is invariant under scaling and
under the action of the Galois group~$\GalQ$,
so that $D_{\alpha,s}$   
is defined over~$\Q$. Note that the hyperelliptic
involution on~$C$ induces an isomorphism between $D_{\alpha,s}$
and~$D_{\alpha,-s}$, and both curves are isomorphic to their common
projection~$D_\alpha$ to~$\PP^{2d-1}$. Projection to the second factor
induces a covering map $\pi_{\alpha,s} : D_{\alpha,s} \to C$.
It can be checked that this map is an unramified covering of~$C$
of degree~$2^{2d-2} = 2^{2g}$;
more precisely, $\pi_{\alpha,s}$ is a $C$-torsor under~$J[2]$, the
2-torsion subgroup of the Jacobian variety~$J$ of~$C$: a \emph{2-covering}
of~$C$. Two such
2-coverings $\pi_{\alpha,s}$ and~$\pi_{\beta,t}$ are isomorphic over~$\Q$
as coverings of~$C$ if and only if there are $\gamma \in L\mult$ and
$w \in \Q\mult$
such that $\beta = \alpha \gamma^2 w$ and $t = s N_{L/\Q}(\gamma) w^d$.
The set of isomorphism classes of 2-coverings of~$C$ that have points
everywhere locally is called the \emph{2-Selmer set}
$\Sel^{(2)}(C/\Q)$ of~$C$. Since
it can be shown that every such 2-covering can be realized in the
form~$\pi_{\alpha,s}$, it follows that the 2-Selmer set can be identified with
a subset of
\[ H_c = \frac{\{(\alpha, s) \in L\mult \times \Q\mult
                  : c N_{L/\Q}(\alpha) = s^2\}}%
              {\{(\gamma^2 w, N_{L/\Q}(\gamma) w^d)
                  : \gamma \in L\mult, w \in \Q\mult\}} \,.
\]
(The group below acts on the set above
by multiplication; the quotient is with respect to this group action.)
It is known that the 2-Selmer set is finite.
The image of~$\Sel^{(2)}(C/\Q)$ under the map to $L\mult/(\Q\mult (L\mult)^2)$
is known as the \emph{fake 2-Selmer set} $\Sel^{(2)}_\fake(C/\Q)$ of~$C$.
The map $\Sel^{(2)}(C/\Q) \to \Sel^{(2)}_\fake(C/\Q)$ is either a
bijection or two-to-one. There is a natural map $C(\Q) \to \Sel^{(2)}(C/\Q)$
given on points with nonvanishing $y$-coordinate by
\[ \delta : C(\Q) \ni (u_0 : v_0 : y_0)
                \longmapsto [u_0 - T v_0, y_0] \in H_c \]
(where the square brackets denote the element represented by $(u_0 - Tv_0, y_0)$ and
$T \in L$ is the identity map). If $y_0 = 0$, then we have $v_0 \neq 0$, and
we can write
\[ f(u,v) = c (u - \theta_0 v) \tilde{f}(u,v)
       \quad\text{with \quad $\tilde{f}(x,1)$ monic,}
\]
where $\theta_0 = u_0/v_0$. Then we set (compare \cite{Sch} and~\cite{PS})
\[ \delta\bigl((u_0 : v_0 : 0)\bigr)
      = [\theta_0-T + c\tilde{f}(T,1), c\tilde{f}(\theta_0 ,1)]. 
\]
Therefore if the 2-Selmer set or the fake 2-Selmer set of~$C$ is empty, then
$C$ cannot have any rational points. The map~$\delta$ above has the property
that if the image of~$P \in C(\Q)$ under~$\delta$ corresponds to $\pi_{\alpha,s}$,
then $P = \pi_{\alpha,s}(Q)$ for some $Q \in D_\alpha(\Q)$.
Denoting the analogues of $H_c$ and~$\delta$ over~$\Q_p$ by $H_{c,p}$
and~$\delta_p$, and writing $\rho_p : H_c \to H_{c,p}$ for the canonical map,
we have that
\[\Sel^{(2)}(C/\Q)
   = \{h \in H_c : \rho_p(h) \in \Img{\delta_p} \text{\ for all places $p$ of~$\Q$}\}
   \,.
\]

For a detailed account of the theory of
2-descent on hyperelliptic curves, see~\cite{BS2}.
There it is shown how the fake 2-Selmer set can be computed
if one can determine the class group of~$L$ (i.e., the class groups
of the various number fields occurring as factors of~$L$) and an
odd-index subgroup of the group of units of~$L$ (dito). Now if the
irreducible factors of~$f$ have large degree, then this information
may be hard or next to impossible to get. So we would like to be able
to compute some kind of intermediate Selmer set with less effort,
at the price of potentially obtaining less information. This is the
`partial 2-descent' that we now describe.

We first define a different model of~$D_{\alpha,s}$ that includes a lot
of redundant variables. We denote by~$\Pi$ the set of all subsets of
the set~$\Theta = \{\theta_1, \ldots, \theta_{2d}\}$ of roots of~$f$.
We will frequently identify elements $\varphi$ of~$\Pi$ with the
corresponding factor $\prod_{\theta \in \varphi} (u - \theta v)$ of~$f$.
For any $\GalQ$-invariant subset $Y$ of~$\Pi$,
we let $\PP_Y$ be the weighted projective
space over~$\Q$ whose coordinates correspond to the elements of~$Y$,
with weights given by their cardinality (as subsets of~$\Theta$)
or degrees (as factors of~$f$), and with twisted Galois action given
by the $\GalQ$-action on~$Y$. We write $z_\varphi$ for the coordinate
corresponding to~$\varphi \in Y$. Then we can define an embedding
\[ \jmath : \PP_\Theta \To \PP_\Pi \,, \quad
            (z_1 : \ldots : z_{2d}) \longmapsto
              ( \ldots : \prod_{i : \theta_i \in \varphi} z_i : \ldots)
\]
where the product gives the $z_\varphi$-coordinate of the image point.
Now we let
\[ \tilde{D}_{\alpha, s}
     = (\jmath \times \id_C)(D_{\alpha, s}) \subset \PP_\Pi \times C \,.
\]

For each $\GalQ$-invariant subset $Y \subset \Pi$, there is a projection
$\pi_Y : \PP_\Pi \to \PP_Y$. We obtain a commutative diagram
\[ \xymatrix{ \tilde{D}_{\alpha,s} \ar[rr]^{\tilde{\pi}_{\alpha,s} }
                                   \ar[dr]_{\pi_Y \times \id}
                & & C \\
                & D_{\alpha,s,Y} \ar[ur]_{\pi_{\alpha,s,Y}}
            }
\]
We write $\pi_{\alpha,s,Y}$ for the restriction of the second projection
$\pr_2 : \PP_Y \times C \to C$ to the image~$D_{\alpha,s,Y}$ of~$\tilde{D}_{\alpha,s}$
under~$\pi_Y \times \id_C$. With these notations,
$D_{\alpha,s} = D_{\alpha,s,\Theta}$ and $\pi_{\alpha,s} = \pi_{\alpha,s,\Theta}$,
and similarly $\tilde{D}_{\alpha,s} = D_{\alpha,s,\Pi}$,
$\tilde{\pi}_{\alpha,s} = D_{\alpha,s,\Pi}$.

It is well-known that
\[ J[2] \cong \frac{\ker(N : \mu_2^\Theta \to \mu_2)}{\mu_2}
        = \frac{\ker(N_{\bar{L}/\Qbar} : \mu_2(\bar{L}) \to \mu_2)}{\mu_2}
\]
where $\bar{L} = L \otimes_{\Q} \Qbar$ and $N$ maps an element
of~$\mu_2^\Theta$ to the product of its entries. The elements
of~$\mu_2^\Theta$ correspond to the subsets of~$\Theta$ in a natural way by
\begin{equation} \label{PimuTh}
   \alpha \longmapsto \{\theta \in \Theta : \alpha(\theta) = -1\} \,.
\end{equation}
The 2-torsion points therefore correspond to the partitions of~$\Theta$
into two sets of even cardinality, and addition in~$J[2]$ corresponds
to taking symmetric differences. The action of~$J[2]$ on a covering~$D_{\alpha,s}$
is given in this setting by
\[ P + \bigl((z_1 : \ldots : z_{2d}), Q)
     = \bigl((\alpha(\theta_1) z_1 : \ldots : \alpha(\theta_{2d}) z_{2d}), Q\bigr)
\]
where $P \in J[2]$ is represented by $\alpha \in \mu_2^\Theta$.
Similarly, the action on~$D_{\alpha,s,Y}$ is given by multiplying $z_\varphi$
with $\prod_{\theta \in \varphi} \alpha(\theta)$, for all $\varphi \in Y$.

We use this description to find the group~$\Gamma_Y \subset J[2]$
of deck transformation of the
covering $\tilde{D}_{\alpha,s} \to D_{\alpha,s,Y}$. Its elements are represented
by those $\alpha \in \mu_2^\Theta$ with $N(\alpha) = 1$ for which there
is $\varepsilon \in \mu_2$ such that
$\prod_{\theta \in \varphi} \alpha(\theta) = \varepsilon^{\#\varphi}$
for all $\varphi \in Y$. Since we can replace $\alpha$ by $\alpha\varepsilon$,
we can take $\varepsilon = 1$. By Galois theory, it follows that
$\pi_{\alpha,s,Y} : D_{\alpha,s,Y} \to C$ is (geometrically) Galois
with Galois group $G_Y \cong J[2]/\Gamma_Y$. This group $G_Y$ is
dual to the annihilator of~$\Gamma_Y$ under the Weil pairing on~$J[2]$.
Recall that the Weil pairing is determined by the parity of the cardinality
of the intersections of the sets in the partitions corresponding to
two elements of~$J[2]$. If we identify $Y$ with a subset of~$\mu_2^\Theta$
via~\eqref{PimuTh}, then the dual group is the image of
$\langle Y \rangle \cap \ker N$ in $J[2] = (\ker N)/\mu_2$.

If $G_Y^\vee$ is neither trivial nor all of~$J[2]$, then we obtain
intermediate coverings. It should be noted that this is not possible
in the generic case when the Galois group of~$f$ is the full symmetric
group~$S_{2d}$, since then the minimal $\GalQ$-invariant subsets~$Y$ of~$\Pi$
contain all subsets of some fixed cardinality, and each such~$Y$ generates
all of~$J[2]$. However, in many cases of interest, there are additional
symmetries present that lead to smaller Galois groups, so that intermediate
coverings may be available.

We want to generalize our setting for Selmer sets. To this end, we
proceed similarly as above. We denote by~$L_Y$ the \'etale $\Q$-algebra
corresponding to the $\GalQ$-set~$Y$ (then $L = L_\Theta$). We have the subgroup
\[ U_\Pi = \{\alpha \in L\mult
             : \alpha(\varphi) = \prod_{\theta \in \varphi} \alpha(\theta)
               \text{\ for all $\varphi \in \Pi$}\}
         \subset L_\Pi\mult
\]
with an embedding
\[ \iota_\Pi : \Q\mult \To U_\Pi\,, \quad
               a \longmapsto (\varphi \mapsto a^{\#\varphi})\,,
\]
the set
\[ V_{c,\Pi} = \{(\alpha, s) \in U_\Pi \times \Q\mult : c \alpha(\Theta) = s^2\}
             \subset U_\Pi \times \Q\mult \,,
\]
on which the group
\[ W_\Pi = \{(\gamma^2 \iota_\Pi(a), a^d \gamma(\Theta))
                       : \gamma \in U_\Pi, a \in \Q\mult\}
         \subset V_{1,\Pi}
\]
acts, and the quotient set
\[ H_{c,\Pi} = V_{c,\Pi} / W_\Pi \,. \]
We can extend the map $\delta : C(\Q) \to H_c$ to a map
\[ \delta_\Pi : C(\Q) \To H_{c,\Pi}\,, \quad
                (u_0:v_0:y_0) \longmapsto [\varphi \mapsto \varphi(u_0,v_0), y_0]
\]
where $[\alpha, s]$ denotes the class of $(\alpha, s)$ and
$\varphi(u,v) = \prod_{\theta \in \varphi} (u - \theta v)$. If $y_0 = 0$,
the definition has to be changed suitably, see the definition of~$\delta$ above.

For the following, we will assume that $Y$ contains a partition~$X$ of~$\Theta$,
and that for every $\varphi \in Y$, there is a partition of~$\Theta$
contained in~$Y$ that has~$\varphi$ as an element.
(If necessary, we can extend~$Y$ by adding the complements of its elements;
this does not change the covering group~$G_Y$ or the fields that occur
as components of~$L_Y$.) Then, using the obvious projections $\pi_Y : L_\Pi \to L_Y$,
we can define a group $U_Y = \pi_Y(U_\Pi)$ with a map $\iota_Y : \Q\mult \to U_Y$,
a set
\[ V_{c,Y} = \{(\alpha, s) \in U_Y \times \Q\mult
                  : c \prod_{\varphi \in X} \alpha(\varphi) = s^2\}
             \subset U_Y \times \Q\mult \,,
\]
a group
\[ W_Y = \{(\gamma^2 \iota_Y(a), a^d \prod_{\varphi \in X} \gamma(\varphi))
                       : \gamma \in U_Y, a \in \Q\mult\}
         \subset V_{1,Y}
\]
and the quotient set
\[ H_{c,Y} = V_{c,Y} / W_Y \,. \]
We get induced maps, which we denote again by~$\pi_Y$, from the objects
associated to~$\Pi$ to the corresponding objects associated to~$Y$.
We define $\delta_Y = \pi_Y \circ \delta_\Pi : C(\Q) \to H_{c,Y}$.
Using notations $H_{c,Y,p}$, $\rho_{Y,p}$, $\delta_{Y,p}$ in analogy
to $H_{c,p}$ etc., we define the \emph{$Y\!$-Selmer set} of~$C$ to be
\[ \Sel(C, Y) = \{h \in H_{c,Y} : \rho_{Y,p}(h) \in \Img{\delta_{Y,p}}
                    \text{\ for all places~$p$ of~$\Q$}\} \,.
\]
Projecting to the first component, we obtain the \emph{fake $Y\!$-Selmer set}
$\Sel_\fake(C, Y)$. We write $H'_{c,Y} \subset U_Y/(U_Y^2 \iota_Y(\Q\mult))$
for the image of~$H_{c,Y}$ under the map induced by projection of the
product $U_Y \times \Q\mult$ to the first factor, and similarly $\delta'_Y$
for the composition $C(\Q) \stackrel{\delta_Y}{\to} H_{c,Y} \to H'_{c,Y}$.
Using $H'_{c,Y,p}$, $\rho'_{Y,p}$, $\delta'_{Y,p}$ for the local equivalents,
we have
\[ \Sel_\fake(C, Y)
      = \{h \in H'_{c,Y} : \rho'_{Y,p}(h) \in \Img{\delta'_{Y,p}}
                           \text{\ for all places~$p$ of~$\Q$}\} \,.
\]

It is clear that the covering $D_{\alpha,s,Y} \to C$ only depends on
the image of~$(\alpha,s)$ in~$V_{c,Y}$; therefore we will write
$\pi_{\beta,s,Y} : D_{\beta,s,Y} \to C$ instead, where $(\beta,s)$
is the image of~$(\alpha,s)$ in~$V_{c,Y}$.
As usual, we then have the following result.

\begin{thm} \label{thm:genpd}
  We have $\delta_Y\bigl(C(\Q)\bigr) \subset \Sel(C, Y)$, and
  \[ C(\Q) = \bigcup_{[\beta, s] \in \Sel(C, X)}
                    \pi_{\beta,s,Y}\bigl(D_{\beta,s,Y}(\Q)\bigr) \,.
  \]
\end{thm}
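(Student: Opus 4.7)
The plan is to mimic the standard 2-descent argument sketched just before the theorem for the case $Y = \Theta$, and to verify that it descends faithfully to the $Y$-projected (and, for the second part, $X$-projected) setting. Both assertions are essentially formal once the local-global compatibility of $\delta_Y$ and its explicit lifting property are in hand.

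For the inclusion $\delta_Y(C(\Q)) \subset \Sel(C, Y)$, I would appeal to the usual commutativity of local and global descent maps. The map $\delta_Y$ is given by polynomial formulas in the coordinates of $P \in C(\Q)$ (with a modified but still polynomial formula when $y_0 = 0$), so the same recipe makes sense over every completion $\Q_p$, and the inclusion $C(\Q) \hookrightarrow C(\Q_p)$ fits into a commutative square with $\delta_Y$, $\delta_{Y,p}$, and $\rho_{Y,p}$. Hence for $P \in C(\Q)$ and each place $p$, $\rho_{Y,p}(\delta_Y(P)) = \delta_{Y,p}(P) \in \Img \delta_{Y,p}$, which is exactly the Selmer condition.

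For the identity $C(\Q) = \bigcup \pi_{\beta,s,Y}(D_{\beta,s,Y}(\Q))$, one containment ($\supset$) is immediate since every $\pi_{\beta,s,Y}$ is a $\Q$-morphism. For the reverse inclusion, given $P = (u_0 : v_0 : y_0) \in C(\Q)$ with $y_0 \neq 0$, I would take $(\alpha, s) = (\theta \mapsto u_0 - \theta v_0,\ y_0) \in V_{c,\Pi}$ as an explicit representative of $\delta_\Pi(P)$ and project to obtain a class $[\beta, s] \in H_{c,Y}$ (and hence its image in $H_{c,X}$). The tuple $(z_1, \ldots, z_{2d}) = (1, \ldots, 1)$ with $a = 1$ visibly satisfies the defining equations $\alpha(\theta_j) z_j^2 = a(u_0 - \theta_j v_0)$ and $s z_1 \cdots z_{2d} = a^d y_0$ of $D_{\alpha,s}$; applying $\jmath$ and then projecting via $\pi_Y \times \id_C$ yields the required $Q \in D_{\beta,s,Y}(\Q)$ mapping to~$P$.

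The main obstacle will be the boundary case $y_0 = 0$, where $\delta_Y$ is given by the modified formula $[\theta_0 - T + c\tilde{f}(T,1),\ c\tilde{f}(\theta_0, 1)]$ and there is no naive tuple-of-ones lift. Here I would adapt the classical argument used for the full 2-Selmer set, constructing the lift on a suitable blow-up of $D_{\alpha,s}$ along the fibre over~$P$ and then pushing forward through $\pi_Y \times \id_C$. A final point to verify is the mildly asymmetric indexing in the union: the projection $H_{c,Y} \to H_{c,X}$ sends $\delta_Y(P)$ to $\delta_X(P) \in \Sel(C, X)$, and $D_{\beta,s,Y}$ is formed from a chosen lift back to $V_{c,Y}$, so one must check that the resulting union is independent of these choices. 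Combining these ingredients yields the theorem.
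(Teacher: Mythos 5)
The paper itself gives no proof of this theorem (it is introduced with ``As usual, we then have the following result''), and your proposal reconstructs exactly the standard argument being invoked: the compatibility square between $\delta_Y$, $\delta_{Y,p}$ and $\rho_{Y,p}$ gives the Selmer inclusion; the all-ones tuple with $a=1$ gives an explicit rational point of $D_{\alpha,s}$ above any $P$ with $y_0\neq 0$, which pushed through $\jmath$ and $\pi_Y\times\id_C$ lands on $D_{\beta,s,Y}$; the containment $\supset$ is formal; and your reading of the indexing (the union should be understood with $\Sel(C,Y)$, since $\delta_Y(P)$ maps to $\delta_X(P)$ and an $X$-class alone does not determine a $Y$-covering, while different representatives of a class give $\Q$-isomorphic coverings of $C$) is the correct one.

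The one step that would fail as written is your treatment of the case $y_0=0$: blowing up $D_{\alpha,s}$ along the fibre over $P$ is vacuous, because that fibre is a Cartier divisor on a smooth curve, so the blow-up is an isomorphism and produces no new rational points. No blow-up is needed; the modified formula for $\delta$ is designed precisely so that a direct lift exists. Write $\theta_0=u_0/v_0\in\Q$, $\alpha=\theta_0-T+c\tilde{f}(T,1)$ and $s=c\tilde{f}(\theta_0,1)$; then $\alpha(\theta_0)=c\tilde{f}(\theta_0,1)$ and $\alpha(\theta_j)=\theta_0-\theta_j$ for $\theta_j\neq\theta_0$, and since $\tilde{f}(x,1)$ is monic with roots the $\theta_j\neq\theta_0$ one checks $cN_{L/\Q}(\alpha)=s^2$. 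Now take $a=v_0$, put $z_{j}=0$ on the coordinate belonging to $\theta_0$ and $z_j=v_0$ on all the others: the equations $\alpha(\theta_j)z_j^2=a(u_0-\theta_j v_0)$ hold because $u_0-\theta_j v_0=v_0(\theta_0-\theta_j)$, the equation at $\theta_0$ and the product equation hold trivially since $z_{j}=0=y_0$, and the point of $\PP_\Theta\times C$ so obtained is $\GalQ$-stable because $\theta_0$ is rational and the remaining coordinates are equal. Pushing this point through $\jmath$ and $\pi_Y\times\id_C$ gives the required rational point of $D_{\beta,s,Y}$ above $P$, with $[\beta,s]=\delta_Y(P)$ lying in the Selmer set by the first part (this is also the classical treatment of Weierstrass points in Poonen--Schaefer and Bruin--Stoll). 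With that substitution your argument is complete and coincides with the one the paper has in mind.
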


The curve $D_{\beta,s}$ is a connected component of the subscheme
of~$\PP_Y \times C$ defined in terms of the coordinates $z_\varphi$
and $(u:v:y)$ by
\[ \exists a \neq 0 : \quad
    \text{$\beta(\varphi) z^2_\varphi = a^{\deg\varphi} \varphi(u,v)$
          for all $\varphi \in Y$ \quad and \quad
          $c \prod_{\varphi \in X} z_\varphi = a^d y$\,.}
\]
(To select the appropriate component, one has to take into account
possible relations between the~$z_\varphi$ --- we can define $D_{\beta,s}$ as
the closure of the set of all $(\PP(z), Q)$ with $z \in \bar{U}_Y$
satisfying
$(\beta z^2, c \prod_{\varphi \in X} z_\varphi)
   = (\bar{\iota}_Y(a), a^d) \bar{\delta}_Y(Q)$
for some $a \in \bar{\Q}\mult$. For this, we extend the objects and maps
to their $\bar{\Q}$-counterparts.)


\section{Computing Selmer sets} \label{S:pdesc1}

In this section, we explain how a set like $\Sel_\fake(C, Y)$ can
be computed. The first step is to reduce the infinitely many local
conditions to only a finite set of places.

Let $Y/\GalQ$ be the set of Galois-orbits of~$Y$. For each orbit
$O \in Y/\GalQ$, we select a representative $\varphi_O \in O$.
Then $L_Y \cong \prod_O K_O$, where $K_O$ is the field of definition
of~$\varphi_O$ (i.e., the subfield of~$\bar{\Q}$ consisting of elements
fixed by the stabilizer   
of~$\varphi_O$ in~$\GalQ$). Let $p$ be a (finite)
prime of~$\Q$, and let $\beta \in U_Y$ be an element such that there
is $s \in \Q\mult$ with $[\rho_{Y,p}(\beta), s] \in \Img{\delta_{Y,p}}$.
Let $\beta_O \in K_O$ be the $O$-component of~$\beta$, and let
$\fp$ be a place of~$K_O$ above~$p$. Let $\varphi'(u,v) = f/\varphi(u,v)$
be the cofactor of~$\varphi(u,v)$. By assumption, there are
$u_0, v_0, y_0 \in \Q_p$ such that $\varphi(u_0,v_0) = \beta_O$ and
$f(u_0,v_0) = y_0^2$. If $p$ does not divide the leading coefficient~$c$
of~$f$ and $\fp$ does not divide the resultant
$R_O = \Res(\varphi(u,v), \varphi'(u,v))$,
then the valuation $v_{\fp}(\beta_O)$ must be even. We therefore define
$S_O$ to be the (finite) set of places~$\fp$ of~$K_O$ such that $\fp \mid \infty$
or $v_{\fp}(c) \neq 0$ or $v_{\fp}(R_O) \neq 0$. Write $\cS$ for the
family $(S_O)_{O \in Y/\GalQ}$. As usual, if $K$ is a number field
and $S$ is a set of places of~$K$ containing the infinite places, we define
\[ K(S, 2) = \{\alpha (K\mult)^2 \in K\mult/(K\mult)^2
                : v_{\fp}(\alpha) \text{\ is even for all $\fp \notin S$}\} \,.
\]
Then we can define
\[ L_Y(\cS, 2) = \prod_O K_O(S_O, 2) \subset L_Y\mult/(L_Y\mult)^2 \,. \]
From the discussion above, it follows that elements of the fake Selmer set
$\Sel_\fake(C, Y)$ are represented by elements $\beta$ of~$L_Y(\cS, 2)$.
Since the groups $K(S, 2)$ are finite when $S$ is finite, this gives
the Selmer set as a subset of a finite group. We have to determine the
image of this finite group in the quotient group
$K_Y\mult/((K_Y\mult)^2 \iota_Y(\Q\mult))$.

The map $\iota_Y : \Q\mult \to L_Y\mult$ induces a map again denoted~$\iota_Y$
from $\Q\mult/(\Q\mult)^2 \to L_Y\mult/(L_Y\mult)^2$. By standard results
from algebra, we have that the image of~$L_Y(\cS, 2)$
in $K_Y\mult/((K_Y\mult)^2 \iota_Y(\Q\mult))$ is given by
$L_Y(\cS, 2)/(L_Y(\cS, 2) \cap \iota_Y(\Q\mult/(\Q\mult)^2))$. So we have to determine
the intersection $L_Y(\cS, 2) \cap \iota_Y(\Q\mult/(\Q\mult)^2)$.
We write $e_{\fp/p}$ for the ramification index of the extension $K_{O,\fp}/\Q_p$.

\begin{lem}
  Let $T$ be the following set of rational primes $p$:
  \[ T = \{ p :
            \text{for all $O$ such that $\#\varphi_O$ is odd:
                  $\forall \fp \mid p$, either $\fp \in S_O$
                  or $2 \mid e_{\fp/p}$}\} \,.
  \]
  Then
  \[ L_Y(\cS, 2) \cap \Img{\iota_Y} = \iota_Y(\Q(T,2))\,. \]
\end{lem}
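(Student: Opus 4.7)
The plan is to prove the sharper pointwise statement that for every $a \in \Q\mult$, the class $\iota_Y([a]) = [(a^{\#\varphi_O})_O]$ lies in $L_Y(\cS,2)$ if and only if the class $[a] \in \Q\mult/(\Q\mult)^2$ lies in $\Q(T,2)$. Granted this equivalence, applying $\iota_Y$ to the set of all such $[a]$ yields $L_Y(\cS,2) \cap \Img\iota_Y = \iota_Y(\Q(T,2))$, which is the stated lemma. The central tool is the identity
\[ v_\fp(a^{\#\varphi_O}) = \#\varphi_O \cdot e_{\fp/p} \cdot v_p(a) \]
for a rational $a$ viewed in $K_O\mult$ via the natural inclusion, where $\fp$ is a place of~$K_O$ above the rational prime $p$. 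Since parity of valuations is invariant under multiplying by squares, $\iota_Y([a]) \in L_Y(\cS,2)$ is equivalent to the assertion that this integer is even for every orbit~$O$ and every $\fp \notin S_O$.

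For the forward inclusion $\iota_Y(\Q(T,2)) \subset L_Y(\cS,2)$, take $[a] \in \Q(T,2)$ and check the parity of $\#\varphi_O \cdot e_{\fp/p} \cdot v_p(a)$ case by case. If $\#\varphi_O$ is even there is nothing to prove. If $\#\varphi_O$ is odd and $p \in T$, the defining property of~$T$ combined with $\fp \notin S_O$ forces $2 \mid e_{\fp/p}$. If $\#\varphi_O$ is odd and $p \notin T$, the assumption $[a] \in \Q(T,2)$ gives $2 \mid v_p(a)$. In every case the product is even.

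For the reverse inclusion, suppose $\iota_Y([a]) \in L_Y(\cS,2)$ and let $p \notin T$ be arbitrary. The negation of the defining condition of~$T$ produces an orbit~$O$ with $\#\varphi_O$ odd together with a place $\fp \mid p$ of~$K_O$ satisfying $\fp \notin S_O$ and $e_{\fp/p}$ odd. The hypothesis then forces $\#\varphi_O \cdot e_{\fp/p} \cdot v_p(a)$ to be even, and since the first two factors are odd, $2 \mid v_p(a)$. As $p$ was arbitrary outside~$T$, we conclude $[a] \in \Q(T,2)$.

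The proof is essentially a mechanical bookkeeping of parities, so no serious obstacle arises; the substantive content lies in the definition of~$T$ itself, which is engineered precisely so that for primes in~$T$ an even ramification index (or the inclusion of~$\fp$ in some $S_O$) absorbs any odd valuation coming from~$v_p(a)$, while for primes outside~$T$ some odd-cardinality orbit witnesses an unavoidable obstruction.
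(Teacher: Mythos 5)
Your proposal is correct and follows essentially the same route as the paper: both reduce the statement to a parity check via $v_\fp(a)=e_{\fp/p}\,v_p(a)$, using the definition of~$T$ for the inclusion $\iota_Y(\Q(T,2))\subset L_Y(\cS,2)$ and the existence of a place $\fp\notin S_O$ with $\#\varphi_O$ and $e_{\fp/p}$ odd for the reverse inclusion. The only cosmetic difference is that you phrase it as a pointwise equivalence on square classes, while the paper unwinds the class condition with an explicit $\beta$; the content is identical.
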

\begin{proof}
  First we show that $\iota_Y(\Q(T,2)) \subset L_Y(\cS, 2)$.
  Suppose $a \in \Q(T,2)$ and $\fp$ is a finite place of~$K_O$, where
  $\#\varphi_O$ is odd (in the other case, there is nothing to show).
  Let $p$ be the rational prime below $\fp$. Now
  \begin{equation} \label{eqn:vfp}
    v_\fp(a) = e_{\fp/p} \cdot v_p(a) \,.
  \end{equation}
  Suppose $v_\fp(a)$ is odd. Thus $v_p(a)$ is odd and $e_{\fp/p}$
  is odd. As $a \in \Q(T,2)$ and $v_p(a)$ is odd, we see that
  $p \in T$. By definition of $T$ we deduce that $\fp \in S_O$.
  This shows that $\iota_Y(a) \in L_Y(\cS,2)$ as required.
  
  Now we want to show that $L_Y(\cS,2) \cap \Img{\iota_Y} \subset \iota_Y(\Q(T,2))$.
  Suppose $\alpha \in L_Y(\cS,2)$ is
  also in $\Img{\iota_Y}$. Then there is some $a \in \Q\mult$ and some 
  $\beta \in L_Y\mult$ such that $\alpha_O \beta_O^2 = a^{\#\varphi_O}$
  for all $O \in Y/\GalQ$. 
  We want to show that $a \in \Q(T,2)$. Suppose $p \notin T$.
  Then there is some place $\fp$ of some~$K_O$ with $\#\varphi_O$ odd
  such that $\fp \mid p$,
  $\fp \notin S_O$ and $e_{\fp/p}$ is odd. As $\fp \notin S_O$, we know that 
  $v_\fp(\alpha_O)$ is even. This implies that $v_\fp(a)$ is even.
  By~\eqref{eqn:vfp}, we see that $v_p(a)$ is even.
  As this is true for all $p \notin T$, we have that $a \in \Q(T,2)$ as required.
\end{proof}

We have already remarked that
the group $L_Y(\cS,2)$ is finite; it is also computable.  
Its computation~\cite{SikSm} requires knowledge of the class groups
of the fields~$K_O$ and of a subgroup of the unit group of each~$K_O$
of full rank and odd index.

\begin{lem}\label{lem:kappaH}
  The fake Selmer set $\Sel_\fake(C, Y)$ is contained in the
  intersection~$H'_{c,Y}$ of
  $L_Y(\cS, 2)/\iota_Y(\Q(T, 2))$ with the image of~$H_{c,Y}$ under the
  projection to $L_Y\mult/((L_Y\mult)^2 \iota_Y(\Q\mult))$.
  If $\beta \in L_Y(\cS, 2)$ represents an element of this intersection,
  then the covering curve $D_{\beta,s,Y}$ (for both possible choices of~$s$)
  has good reduction at all odd primes $p$ not dividing the discriminant 
  or the leading coefficient of~$f$.
\end{lem}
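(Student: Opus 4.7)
The plan is to prove the two claims in order, with most of the work going into the good reduction statement.

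For the containment in part~(1), the pieces are already in place. The analysis preceding the lemma shows that any element of $\Sel_\fake(C,Y)$ is represented by some $\beta \in L_Y(\cS,2)$: at a place $\fp$ of some $K_O$ with $\fp \notin S_O$ arising from a rational point $(u_0:v_0:y_0)$, the identity $c \varphi_O(u_0,v_0) \varphi'_O(u_0,v_0) = y_0^2$ combined with $\fp \nmid c R_O$ forces $v_\fp(\beta_O) = v_\fp(\varphi_O(u_0,v_0))$ to be even. The preceding lemma identifies $L_Y(\cS,2) \cap \Img{\iota_Y} = \iota_Y(\Q(T,2))$, so the image of $L_Y(\cS,2)$ in $L_Y\mult/((L_Y\mult)^2 \iota_Y(\Q\mult))$ is precisely $L_Y(\cS,2)/\iota_Y(\Q(T,2))$. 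Since the fake Selmer set projects into the image of $H_{c,Y}$ by its very definition, the claimed containment follows.

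For part~(2), fix an odd prime $p$ with $p \nmid c \cdot \operatorname{disc}(f)$. Every place $\fp$ of every $K_O$ lying above $p$ avoids $S_O$: we have $v_\fp(c)=0$ trivially, and since $R_O = \Res(\varphi_O(u,v), \varphi'_O(u,v))$ divides a power of $c$ times $\operatorname{disc}(f)$ (via the standard discriminant-resultant formula for $f = \varphi_O \cdot \varphi'_O$), we also have $v_\fp(R_O)=0$. As $\beta \in L_Y(\cS,2)$, we conclude $v_\fp(\beta_O) \in 2\Z$ for every such $\fp$. Applying weak approximation in each of the number field components of $L = L_\Theta$, we construct $\tilde\gamma \in U_\Pi$ Galois-equivariantly so that its image $\gamma \in U_Y$ satisfies $v_\fp(\beta_O \gamma_O^2) = 0$ for all $\fp \mid p$ and all orbits $O$. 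Replacing $(\beta,s)$ by the $W_Y$-equivalent pair $\bigl(\beta \gamma^2, s \prod_{\varphi \in X} \gamma(\varphi)\bigr)$, we may assume each $\beta_O$ is a $p$-adic unit at every place above $p$.

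With this normalization, the defining equations
\[ \beta(\varphi) z^2_\varphi = a^{\deg\varphi} \varphi(u,v), \qquad c \prod_{\varphi \in X} z_\varphi = a^d y \]
give an integral model of $D_{\beta,s,Y}$ over $\Z_{(p)}$. Since $p \nmid c \cdot \operatorname{disc}(f)$, the reduction $\bar C$ of $C$ at $p$ is smooth; since $p$ is odd and all $\beta(\varphi)$ are $p$-adic units, these Kummer-type equations reduce to define an étale Galois cover of $\bar C$ with group $G_Y$ (recall that $\pi_{\beta,s,Y}$ is already unramified in characteristic zero, and étaleness is preserved under reduction once the coefficients in the equations are units in a characteristic coprime to the covering degree). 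An étale cover of a smooth curve is smooth, so $D_{\beta,s,Y}$ has good reduction at $p$. The main obstacle is the normalization step: one must verify that $\gamma \in U_Y$ with the prescribed local valuations exists. This uses the multiplicative definition of $U_\Pi$ — it suffices to prescribe the singleton components Galois-equivariantly, since the larger $\gamma_\varphi$ are then forced as products — together with weak approximation applied in the number field components of $L$.
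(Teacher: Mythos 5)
Your overall route is the same as the paper's: part (1) from the preceding discussion together with the lemma identifying $L_Y(\cS,2)\cap\Img\iota_Y$, and part (2) by noting that all places $\fp\mid p$ lie outside $\cS$ (your resultant/discriminant computation is correct), so that $v_\fp(\beta_O)$ is even, then replacing $\beta$ by a representative that is a unit above~$p$ and reducing the defining equations mod~$p$ to get an unramified cover of the smooth reduction of~$C$, which is therefore smooth. Those parts are fine.

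The gap is in your normalization step. You insist that the adjustment be a $W_Y$-move, i.e.\ that there exist $\gamma\in U_Y$ with $v_\fp(\gamma_O)=-\tfrac12 v_\fp(\beta_O)$ for all $O$ and all $\fp\mid p$, and you claim weak approximation in the components of $L$ produces it. Weak approximation lets you prescribe the valuations of $g\in L\mult$ at the places of the component fields of $L$ above~$p$, but what you must control are the valuations of the induced partial products $\gamma_O=\prod_{\theta\in\varphi_O}g(\theta)$ at the places of $K_O$, and these lie in a proper subgroup of $\Z$ in general. For example, if $\varphi_O$ is a full Galois orbit (an irreducible factor of~$f$ over~$\Q$, exactly the situation in the paper's own computations with $K=\Q$), then $K_O=\Q$ and $\gamma_O=N_{M/\Q}(g_M)$ for the corresponding field~$M$; if every place of $M$ above~$p$ has even residue degree, then $v_p(\gamma_O)$ is forced to be even, whereas the target $-\tfrac12 v_p(\beta_O)$ may be odd (say $v_p(\beta_O)=2$), and the extra factor $\iota_Y(a)$ cannot repair this when $\#\varphi_O\equiv 0 \pmod 4$. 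So the $\gamma\in U_Y$ you need may simply not exist. The step is also unnecessary: good reduction is invariant under isomorphism (and is a local property at~$p$), and multiplying $\beta_O$ by the square of an \emph{arbitrary} $\delta_O\in K_O\mult$ --- not one coming from $U_Y$ --- changes $D_{\beta,s,Y}$ only by the Galois-equivariant diagonal coordinate change $z_\varphi\mapsto\delta(\varphi)z_\varphi$ on $\PP_Y$ (with the last equation rescaled accordingly), hence by a $\Q$-isomorphism of covers. Since $v_\fp(\beta_O)$ is even at the finitely many $\fp\mid p$, such a $\delta_O$ with $v_\fp(\delta_O)=-\tfrac12 v_\fp(\beta_O)$ exists in $K_O\mult$; this is what the paper means by choosing a tuple of representatives $(\beta_O)_O$ that are $\fp$-adic units for all $\fp\mid p$. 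With that correction, the rest of your reduction argument goes through as in the paper.
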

\begin{proof}
  The first assertion follows from the preceding discussion.
  
  If $p$ is an odd prime not dividing the discriminant or the leading coefficient
  of~$f$, then $\beta$ can be represented by a tuple $(\beta_O)_O$ such that
  $\beta_O$ is a $\fp$-adic unit for all $\fp \mid p$. The whole construction
  of covering curves can then be carried out over~$\F_p$. In particular, we
  obtain an unramified Galois covering of $C/\F_p$ that is the reduction of
  $D_{\beta,s,Y}$ mod~$p$, which must therefore be smooth.
\end{proof}

Let $\cA$ be any set of (finite or infinite) places of~$\Q$. 
We denote by $\Sel_\fake(C, Y, \cA)$ the subset of~$H'_{c,Y}$ consisting of
elements that satisfy the local conditions for the fake Selmer set at all
places in~$\cA$. Then
\[ \Sel_\fake(C, Y) = \Sel_\fake(C, Y, \{\text{all places of~$\Q$}\})
     \subset \Sel_\fake(C, Y, \cA) \,.
\]
By definition, $\Sel(C, Y)$ maps to $\Sel_\fake(C, Y)$. So
by Theorem~\ref{thm:genpd}, we see that $C(\Q)$ must be empty if
$\Sel_\fake(C, Y, \cA) = \emptyset$.

This definition will be useful, since we will see that we would need
to check the local conditions at very many primes if we want to compute
the fake Selmer set exactly. Using a smaller number of primes can already
give a very useful upper bound, which is much easier to compute.

The next result shows that we only need to consider a finite set of places
when we want to compute a (fake) Selmer set.

\begin{thm}\label{thm:partialdes}
  Let $\cB$ be a set of rational primes containing the following primes~$p$:
  \begin{itemize}
    \item $p = \infty$,
    \item $p < 4 g_D^2$ where $g_D = \#G_Y (g - 1) + 1$ is the genus of the covering
          curves $D_{\beta,s,Y}$.
    \item $p$ dividing the discriminant or the leading coefficient of~$f$.
  \end{itemize}
  Then $\Sel_\fake(C, Y) = \Sel_\fake(C, Y, \cB)$.
\end{thm}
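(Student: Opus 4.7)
The plan is to prove the nontrivial containment $\Sel_\fake(C, Y, \cB) \subseteq \Sel_\fake(C, Y)$ by showing that for every class $h \in H'_{c,Y}$ and every prime $p \notin \cB$, the local condition $\rho'_{Y,p}(h) \in \Img{\delta'_{Y,p}}$ is automatic. Geometrically, this condition amounts to the existence of a $\Q_p$-rational point on a covering curve $D_{\beta,s,Y}$ representing~$h$, so the strategy is to produce such a point by combining the good reduction statement of Lemma~\ref{lem:kappaH} with the Hasse--Weil point count and Hensel lifting.

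First I would fix $h \in \Sel_\fake(C, Y, \cB)$ and, using the reduction to $L_Y(\cS, 2)$ carried out before Lemma~\ref{lem:kappaH}, lift it to a class $[\beta, s] \in H_{c,Y}$ with $\beta \in L_Y(\cS, 2)$. For any prime $p \notin \cB$ we have $p \geq 4 g_D^2$, so in particular $p \geq 4$ (and hence $p \geq 5$ is odd), and $p$ does not divide the discriminant or the leading coefficient of~$f$. Lemma~\ref{lem:kappaH} therefore gives that the twist $D_{\beta,s,Y}$ has good reduction at~$p$; denote its smooth $\F_p$-model by $\bar{D}_{\beta,s,Y}$.

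Next I would apply the Hasse--Weil bound to $\bar{D}_{\beta,s,Y}$: it is a smooth projective curve of genus $g_D$, and it is geometrically integral because $D_{\beta,s,Y}$ arises as a $\Q$-twist of the (single, geometrically connected) Galois cover of $C$ with group $G_Y$ constructed earlier. Hence
\[ \#\bar{D}_{\beta,s,Y}(\F_p) \;\geq\; p + 1 - 2 g_D \sqrt{p}, \]
and the inequality $p \geq 4 g_D^2$ rearranges to $\sqrt{p} \geq 2 g_D$, yielding $p + 1 - 2 g_D \sqrt{p} \geq 1$. Therefore $\bar{D}_{\beta,s,Y}(\F_p) \neq \emptyset$, every such point is smooth by good reduction, and Hensel's lemma lifts it to a point in $D_{\beta,s,Y}(\Q_p)$. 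Projecting via $\pi_{\beta,s,Y}$ produces a point in $C(\Q_p)$ whose image under $\delta'_{Y,p}$ equals $\rho'_{Y,p}(h)$, so the local condition at $p$ is satisfied.

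The one subtle point I anticipate is the geometric irreducibility of $D_{\beta,s,Y}$ required for the Hasse--Weil estimate. This should follow directly from the construction: over $\Qbar$ the covering $D_{\beta,s,Y} \to C$ is $\Qbar$-isomorphic to the untwisted Galois cover of $C$ with group $G_Y$, which is geometrically connected by faithfulness of the $G_Y$-action on an affine patch. Once this is dispatched, intersecting over all $p \notin \cB$ shows $h \in \Sel_\fake(C, Y)$, giving the desired equality $\Sel_\fake(C, Y) = \Sel_\fake(C, Y, \cB)$.
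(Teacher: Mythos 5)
Your proposal is correct and follows essentially the same route as the paper: for $p \notin \cB$, Lemma~\ref{lem:kappaH} gives good reduction of $D_{\beta,s,Y}$ at~$p$, the bound $p \ge 4 g_D^2$ combined with Hasse--Weil gives a smooth $\F_p$-point, Hensel lifts it to $D_{\beta,s,Y}(\Q_p)$, and this forces the local condition at~$p$ to hold. Your extra remark on geometric irreducibility is a point the paper leaves implicit, but it does not change the argument.
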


\begin{proof}
  We have to show that for a prime~$p$ outside~$\cB$, the local condition
  is automatically satisfied. So assume that $p \notin \cB$. Then $p \ge 4 g_D^2$
  and $p$ does not divide the discriminant or leading coefficient of~$f$.
  Let $\beta \in L_Y(\cS, 2)$ represent an element of~$H'_{c,Y}$.
  The latter two conditions on~$p$ imply by Lemma~\ref{lem:kappaH} that
  $D_{\beta,s,Y}$ has good reduction at~$p$. The first condition then implies
  by the Hasse-Weil bounds that the reduction of~$D_{\beta,s,Y}$ mod~$p$ has (smooth) 
  $\F_p$-points; then Hensel's Lemma shows that $D_{\beta,s,Y}(\Q_p)$ is non-empty.
  This in turn means that the element represented by~$\beta$ is in the image
  of $\pr_1 \circ \delta_{Y,p}$. The theorem follows.
\end{proof}

One convenient way of obtaining a suitable set~$Y$ is the following.
We fix some number field~$K$ and let $X$ be the partition of~$\Theta$
corresponding to the factorization of~$f$ into irreducible factors over~$K$.
Then $Y$ can be taken to be the union of the $\GalQ$-orbits of the elements
of~$X$. In this case, all the fields occurring as components of the
algebra~$L_Y$ will be (isomorphic to) subfields of~$K$. We will denote
the corresponding (fake) Selmer sets also by $\Sel(C, K)$, $\Sel(C, K, \cA)$,
$\Sel_\fake(C, K)$ and $\Sel_\fake(C, K, \cA)$.

In the following, we will assume that we are in this situation and would
like to compute $\Sel_\fake(C, K, \cA)$ for some finite set~$\cA$ of primes.
For simplicity, we will assume in addition that none of the factors
in the factorization of~$f$ over~$K$ is actually defined over a smaller
field and that no two of the factors are in the same $\GalQ$-orbit.
Since it is advantageous for the computation, we will remove  
the requirement
that the factors are monic and instead consider a factorization
\[ f(u,v) = c f_1(u,v) f_2(u,v) \cdots f_r(u,v) \]
with $c \in \Z$ and polynomials $f_1, \ldots, f_r$ with coefficients in~$\OO$,
the ring of integers of~$K$. (It may be necessary to scale $f$ by an
integral square to make this possible.)
Then $L_Y \cong K^r$, and if we let $S$ be the union of the sets~$S_j$
corresponding to the orbit of~$f_j$, then $L_Y(\cS, 2) \subset K(S, 2)^r$.

\subsection*{Testing the local conditions}

Let $h \in H'_{c,Y}$, and let $p$ be a rational prime. To be able
to compute $\Sel_\fake(C, K, \cA)$, we need an algorithm for determining
whether $\rho_p(h) \in \Img(\delta'_{Y,p})$ (for all $p \in \cA$). 
Let us deal first with the case $p = \infty$ which is certainly easier.
In this case we actually compute the image of  
$\delta'_{Y,\infty} : C(\mathbb{R}) \rightarrow H'_{c,Y,\infty}$. There are at most
two real points at infinity, and their image is the same as that of nearby
points, so there is no need to consider them separately.
Thus we may restrict to the affine patch given by
$y^2 = f(u,1)$. As we move along the affine patch, the image changes
only where $y = 0$.  Let $I_1, \dots, I_k$ be the open intervals on which
$f(u,1)$ is positive. For each $j$ choose $u_j \in I_j$
and $y_j$ such that $y_i^2 = f(u_j,1)$. Then $\Img(\delta'_{Y,\infty})$
is simply $\{\delta'_{Y,\infty}(u_j:1:y_j) : 1 \le j \le k\}$.

We now let $p$ be a finite prime.
We shall suppose that $h$ is 
represented by $(\alpha_1,\dots,\alpha_r)$ in $K(S,2)^r$. 
We denote the degree of~$f_j$ by~$d_j$ and shall also suppose that
$d_1, \dots, d_s$ are odd and $d_{s+1}, \dots, d_r$ are even.
Then $\rho_p(h) \in \Img(\delta'_{Y,p})$ if and only if there is
some $(u:v) \in \PP^1(\Q_p)$ and $a \in \Q_p\mult/(\Q_p\mult)^2$
such that
$a \alpha_i F_i(u,v) \in K_p^2$ for $1 \leq i \leq s$
and $\alpha_i F_i(u,v) \in K_p^2$ for $s+1 \leq i \leq r$
and $f(u,v) \in \Q_p^2$.
Now $\Q_p\mult/(\Q_p\mult)^2$ is finite, and we choose
rational integers $a$ representing its cosets.
Moreover $(u:v) = (x:1)$
or $(1:px)$ for some $x \in \Z_p$. Thus we can decide whether $h$
maps into the local image st~$p$ if we can decide the following
question: given a polynomial $f \in \Z[x]$ and polynomials
$f_i \in \OO[x]$, is there $x \in \Z_p$ such that 
$f(x) \in \Q_p^2$ and $f_i(x) \in K_p^2$ for $1 \leq i \leq r$. 
Equivalently, is there $x \in \Z_p$ satisfying  
the following property
\begin{equation}\label{eqn:prop}
  \begin{cases}
    \text{$f(x)$ is a square in $\Q_p$ and}\\ 
    \text{$f_i(x)$ is a square in $K_\fp$} & 
      \text{for each place $\fp$ above $p$ and for $1 \leq i \leq r$}.
  \end{cases}
\end{equation}
We shall restrict to the case where 
\begin{equation}\label{eqn:fprop}
  \text{$f$ is separable} \qquad
  \text{and} \qquad
  \text{$f/\prod f_i$ is a constant in $(K\mult)^2$}. 
\end{equation}
This is certainly true for the $f$ and $f_i$ in our situation.

We shall need the following pair of lemmas.
\begin{lem}\label{lem:crit1}
  Let $g=\sum a_i x^i \in \Z[x]$ and $x_0 \in \Z$ and $n \geq 1$.
  Let $c=\min{v_p(a_i)}$ and $m=v_p(g^\prime(x_0))$.
  Write $\lambda=\min \{ m+n , c+2n\}$. 
  Let $k=v_p(g(x_0))$. Suppose $k< \lambda$.
  If either of the following two conditions holds,
  \begin{itemize}
    \item $k$ is odd, or
    \item $k$ is even and $g(x_0)/p^{k}$ is not a square modulo $p^{\lambda-k}$,
  \end{itemize}
  then $g(x)$ is not a square in $\Q_p$ for all $x \in x_0+p^n \Z_p$.
\end{lem}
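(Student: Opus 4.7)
The plan is to write a generic element of $x_0 + p^n \Z_p$ as $x_0 + p^n t$ with $t \in \Z_p$ and to Taylor expand:
\[ g(x_0 + p^n t) = g(x_0) + p^n t\, g'(x_0)
     + \sum_{i \ge 2} \frac{g^{(i)}(x_0)}{i!}\, p^{ni} t^i . \]
The crucial estimate is that each divided derivative $g^{(i)}(x_0)/i!$ has $p$-adic valuation at least~$c$. This is because, expanding around $x_0$, the coefficient $g^{(i)}(x_0)/i!$ is an integer linear combination (with binomial coefficient weights) of the $a_j x_0^{j-i}$, hence a $\Z$-linear combination of the $a_j$. Consequently every term with $i \ge 2$ has valuation at least $c + 2n$, and the linear term has valuation at least $m + n$. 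Adding these and grouping, we obtain the master congruence
\[ g(x_0 + p^n t) \equiv g(x_0) \pmod{p^{\lambda}}, \qquad
   \lambda = \min(m+n,\, c+2n). \]

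The hypothesis $k = v_p(g(x_0)) < \lambda$ then forces $v_p\bigl(g(x_0 + p^n t)\bigr) = k$ exactly, independently of~$t$. If $k$ is odd, then $g(x_0+p^n t)$ cannot be a square in~$\Q_p$ since $p$-adic squares have even valuation, and we are done.

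If $k$ is even, write $g(x_0 + p^n t) = p^k u$ with $u \in \Z_p^\times$. For $g(x_0+p^n t)$ to be a square it is necessary that $u$ be a square in~$\Z_p^\times$, and in particular that $u$ be a square modulo~$p^{\lambda - k}$. Since $u \equiv g(x_0)/p^k \pmod{p^{\lambda-k}}$, the hypothesis that $g(x_0)/p^k$ is not a square modulo~$p^{\lambda - k}$ yields a contradiction. The only step requiring any real care is the Taylor bound, and in particular the integrality of the weights $\binom{j}{i}$ that makes $v_p(g^{(i)}(x_0)/i!) \ge c$; once this is observed, the remainder of the argument is bookkeeping with valuations.
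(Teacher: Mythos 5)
Your proof is correct and follows essentially the same route as the paper: a Taylor expansion around $x_0$ with the higher-order terms bounded via the integrality of the divided derivatives of $g/p^c$ (the paper phrases this by applying Taylor's Theorem to $g_1 = g/p^c$), yielding the congruence $g(x) \equiv g(x_0) \pmod{p^\lambda}$, from which the conclusion follows by parity of the valuation and the non-square residue condition. The only difference is presentational; you spell out the final square/non-square bookkeeping that the paper leaves implicit.
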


\begin{lem}\label{lem:crit2}
  Let $g=\sum a_i x^i \in \OO[x]$ and $x_0 \in \Z$ and $n \geq 1$.
  Let $\fp$ be a place above $p$ having ramification index $e$
  and let $\pi \in \OO$ be a uniformizing element for $\fp$.
  Let $c=\min{v_\fp(a_i)}$ and $m=v_\fp(g^\prime(x_0))$.
  Write $\lambda=\min \{ m+n e, c+2ne\}$. 
  Let $k=v_\fp(g(x_0))$. Suppose $k< \lambda$.
  If either of the following two conditions holds,
  \begin{itemize}
    \item $k$ is odd, or
    \item $k$ is even and $g(x_0)/\pi^{k}$ is not a square modulo $\pi^{\lambda-k}$,
  \end{itemize}
  then 
  $g(x)$ is not a square in $K_\fp$ for all $x \in x_0+p^n \Z_p$.
\end{lem}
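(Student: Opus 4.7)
The plan is to Taylor-expand $g$ around $x_0$ and then reduce the claim to valuation bookkeeping at~$\fp$. For $x \in x_0 + p^n \Z_p$, I would write $x = x_0 + p^n t$ with $t \in \Z_p$ and expand
\[
    g(x) = g(x_0) + g'(x_0)\, p^n t + \sum_{i \ge 2} b_i (p^n t)^i,
    \qquad b_i = \sum_{j \ge i} a_j \binom{j}{i} x_0^{j-i}.
\]
Since $x_0 \in \Z \subset \OO$ and the binomial coefficients are integers, each $b_i$ lies in~$\OO$, so $v_\fp(b_i) \ge c$. Using $v_\fp(p^n t) \ge ne$, the linear term has $\fp$-valuation at least $m + ne$, and the tail has $\fp$-valuation at least $c + 2ne$; hence
\[
    g(x) \equiv g(x_0) \pmod{\pi^{\lambda}}.
\]

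Because $k < \lambda$, the ultrametric inequality forces $v_\fp(g(x)) = k$. If $k$ is odd, then $g(x)$ has odd $\fp$-adic valuation and cannot be a square in~$K_\fp$. In the remaining case, I would write $g(x) = \pi^k u$ and $g(x_0) = \pi^k u_0$ with $u, u_0 \in \OO_\fp^\times$; the congruence above yields $u \equiv u_0 \pmod{\pi^{\lambda - k}}$. A unit that is a square in~$\OO_\fp$ is in particular a square modulo~$\pi^{\lambda - k}$, so if $u_0 = g(x_0)/\pi^k$ is assumed not to be a square modulo $\pi^{\lambda - k}$, neither is~$u$, and therefore $g(x)$ fails to be a square in~$K_\fp$.

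The only technical point is the bound $v_\fp(b_i) \ge c$, which is just the integrality of the Taylor coefficients coming from $x_0 \in \Z$ and from $\binom{j}{i} \in \Z$. One also has to keep track of the ramification factor~$e$ when translating valuations from~$\Q_p$ to~$K_\fp$; this is precisely why the exponent $n$ appears multiplied by~$e$ in $\lambda = \min\{m + ne,\, c + 2ne\}$. With these two points in hand, everything reduces to the valuation estimate displayed above, and Lemma~\ref{lem:crit1} drops out as the specialization $K = \Q$, $e = 1$, $\pi = p$.
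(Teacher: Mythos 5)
Your proposal is correct and follows essentially the same route as the paper: a Taylor expansion of $g$ about $x_0$, with the observation that the higher-order coefficients have $\fp$-valuation at least $c$ and $x-x_0$ has valuation at least $ne$, yielding $g(x)\equiv g(x_0)\pmod{\pi^{\lambda}}$, from which the non-square conclusion follows. The only difference is cosmetic (the paper normalizes by $\pi^{c}$ before invoking Taylor's theorem, and leaves the final square/valuation deduction implicit, which you spell out), and your remark that Lemma~\ref{lem:crit1} is the case $K=\Q$, $e=1$, $\pi=p$ matches the paper's comment.
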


We shall prove Lemma~\ref{lem:crit2}, with the proof of
Lemma~\ref{lem:crit1} being an easy simplification.

\begin{proof}
  Suppose $x \in x_0+p^n \Z_p \subset x_0+\pi^{ne} \OO_\pi$. 
  Let $g_1=g/\pi^c \in \OO_\pi[x]$.
  By Taylor's Theorem
  \[
  g_1(x) = g_1(x_0)+(x-x_0) g_1^\prime(x_0) + O(\pi^{2ne}).
  \]
  Thus 
  \[
  g(x)=g(x_0) + (x-x_0) g^\prime(x_0) + O(\pi^{c+2ne}).
  \]
  It follows that
  \[
  g(x) \equiv g(x_0) \pmod{\pi^\lambda},
  \]
  where $\lambda$ is given in the statement of the lemma. The lemma follows.
\end{proof}

We return to our question: given $f\in \Z[x]$ and $f_i \in \OO[x]$
satisfying~\eqref{eqn:fprop},
is there $x \in \Z_p$ satisfying~\eqref{eqn:prop}? Our algorithm for
answering this question produces a sequence of finite sets
of integers $\cB_0, \cB_1, \cB_2, \dots$ satisfying 
\begin{equation}\label{eqn:cB}
  \{ x \in \Z_p : \text{$x$ satisfies \eqref{eqn:prop}} \}
     \subset \bigcup_{x_0 \in \cB_n} (x_0 + p^n \Z_p).
\end{equation}
We start with $\cB_0=\{0\}$. To produce $\cB_n$ from $\cB_{n-1}$
we initially let
\[
\cB_n=\{x_0+p^n a : \text{$x_0 \in \cB_{n-1}$ and $0 \leq a \leq p-1$} \}.
\]
If any $x_0 \in \cB_n$ satisfies \eqref{eqn:prop} then
we have answered our question positively and our algorithm
terminates. Otherwise,  
for each $x_0 \in \cB_n$,
we apply the tests in 
Lemmas \ref{lem:crit1} and~\ref{lem:crit2}. 
If the hypotheses of Lemma~\ref{lem:crit1} apply to $g=f$
with the current choices of $x_0$ and $n$ then
we eliminate $x_0$ from $\cB_n$. Likewise if
there is some $1 \leq i \leq r$,
and some $\fp$ above $p$ such that $g=f_i$ satisfies the hypotheses 
of Lemma~\ref{lem:crit2}. 
We see that once this process is complete, $\cB_n$ still
satisfies~\eqref{eqn:cB}. At any stage of the algorithm, if $\cB_n$
is empty then we have answered our question negatively.

\begin{lem}
  The above algorithm terminates in finite time.
\end{lem}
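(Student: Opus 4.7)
The plan is to argue by contradiction, via a K\"onig's lemma / compactness argument combined with Lemmas~\ref{lem:crit1} and~\ref{lem:crit2}. Suppose the algorithm never terminates. Then for every $n\ge 0$ the set $\cB_n$ is nonempty, no element of $\cB_n$ is certified as satisfying~\eqref{eqn:prop}, and no element of $\cB_n$ is eliminated by either criterion. Each $x_0 \in \cB_{n-1}$ has at most $p$ children in $\cB_n$, so the tree $\bigsqcup_n \cB_n$ is finitely branching and infinite; K\"onig's lemma thus produces an infinite branch $(x_n)_{n \ge 0}$, which converges $p$-adically to some $x^{\ast} \in \Z_p$.

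The first substantive step is to show that $x^{\ast}$ must satisfy~\eqref{eqn:prop}. Otherwise some $g \in \{f, f_1, \ldots, f_r\}$ and some place $\fp \mid p$ of the corresponding completion yield $g(x^{\ast}) \notin K_\fp^2$; since $0$ is a square, $g(x^{\ast})$ is a nonzero $\pi^k u$ with $u \in \OO_\fp\mult$, where either $k$ is odd or $u \pmod{\pi^M}$ is a nonsquare for some finite $M$. Because $x_n \to x^{\ast}$ in the $\fp$-adic topology, and (by the separability assumption~\eqref{eqn:fprop}) $v_\fp(g'(x^{\ast}))$ is finite, for all large $n$ we have $v_\fp(g(x_n)) = k$, $g(x_n)/\pi^k \equiv u \pmod{\pi^M}$, and $v_\fp(g'(x_n))$ equal to a bounded constant $m$. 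Hence $\lambda_n = \min\{m + ne,\ c + 2ne\} \to \infty$, so for $n$ sufficiently large $k < \lambda_n$ and $\lambda_n - k \ge M$, at which point the hypotheses of Lemma~\ref{lem:crit2} (or Lemma~\ref{lem:crit1} when $g=f$) are satisfied at $x_n$, eliminating it and contradicting $x_n \in \cB_n$.

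The other direction is to show that $x^{\ast}$ satisfying~\eqref{eqn:prop} forces the algorithm's positive-detection step to succeed, again contradicting non-termination. In the generic case $g(x^{\ast}) \neq 0$ for every $g$ on the list: writing $g(x^{\ast}) = \pi^{k_g} u_g$ with $k_g$ even and $u_g \in (\OO_\fp\mult)^2$, continuity gives the same valuation and square class for $g(x_n)$ for all large $n$, so the integer $x_n$ witnesses~\eqref{eqn:prop} at finite precision. In the degenerate case some $g(x^{\ast}) = 0$; then by~\eqref{eqn:fprop}, $x^{\ast}$ is a simple root of $f$ and of exactly one $f_i$ in $\Q_p$, and the Taylor expansion $f_i(x) = f_i'(x^{\ast})(x - x^{\ast}) + O\bigl((x - x^{\ast})^2\bigr)$ identifies the $x \in \Z_p$ satisfying~\eqref{eqn:prop} near $x^{\ast}$ with those $x$ whose difference $x - x^{\ast}$ lies in a fixed coset of $K_\fp^2$ in $K_\fp\mult$. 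Since the rational integers are dense in $\Z_p$, this coset contains integer representatives; for $n$ large one such representative lies in $\cB_n$ and is detected by the algorithm.

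The main obstacle is this degenerate case at the end. The generic case reduces cleanly to the openness of the group of squares in $K_\fp\mult$ and $\Q_p\mult$, but near a common zero of $f$ and some $f_i$ one has to run a local-Hensel-style analysis to exhibit a rational integer $x_0$ with the correct $\fp$-adic valuation and unit class so that every relevant $g(x_0)$ is a square. This means combining the Taylor expansion of $f_i$ (whose value vanishes at $x^{\ast}$) with those of the other $f_j$ (whose values are nonzero squares by continuity) and checking that the resulting system of square-class conditions is simultaneously satisfiable by an integer representative in $\cB_n$ for all sufficiently large $n$.
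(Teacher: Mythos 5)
Your overall strategy matches the paper's: assume non-termination, extract a branch limit $x^{*}\in\Z_p$ (the paper does this without naming K\"onig's lemma), use Lemmas~\ref{lem:crit1} and~\ref{lem:crit2} to show the branch would be eliminated if $x^{*}$ fails \eqref{eqn:prop}, and use openness of the set of squares to show the algorithm would detect a positive answer if $x^{*}$ satisfies \eqref{eqn:prop} with all values nonzero. Those two cases are fine. The genuine gap is the degenerate case where some value vanishes at $x^{*}$, which you yourself flag as ``the main obstacle'' and do not resolve; moreover your one concrete assertion there is wrong as stated. You claim that, by density of $\Z$ in $\Z_p$, an integer in the good coset ``lies in $\cB_n$'' for large $n$. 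But the elements of $\cB_n$ are the specific representatives produced by the digit-by-digit refinement; you cannot choose which integer of a residue class belongs to $\cB_n$. In particular the representatives congruent to $x^{*}$ itself (the branch elements $x_n$) need satisfy neither \eqref{eqn:prop} nor the hypotheses of the elimination lemmas, since $v_p(f(x_n))$ is roughly $n+v_p(f'(x^{*}))$, growing with $n$ and with a square class that depends on the branch, so nothing forces termination along the branch in this case.

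What is needed --- and what the paper supplies --- is a nearby point at which every relevant value is a nonzero square, so that your generic argument can be applied to it. The paper takes $x^{**}=x^{*}+p^{2u}f'(x^{*})$: Taylor expansion gives $f(x^{**})\equiv\bigl(p^{u}f'(x^{*})\bigr)^2 \pmod{p^{4u}}$, hence a nonzero square for $u$ large; the non-vanishing $f_j(x^{**})$ remain nonzero squares by continuity; and, crucially, the vanishing factor $f_1(x^{**})$ is forced to be a square by \eqref{eqn:fprop} (because $f/\prod f_i$ is a constant square), so its square class never has to be analysed --- this is exactly the device your ``system of square-class conditions'' is missing. Once such an $x^{**}$ satisfying \eqref{eqn:prop} with all values nonzero exists, the invariant \eqref{eqn:cB} guarantees that for every $n$ some $x_0\in\cB_n$ is congruent to $x^{**}$ modulo $p^{n}$, and the same continuity/openness argument you already use in the generic case shows that this $x_0$ itself satisfies \eqref{eqn:prop} for $n$ large, so the algorithm halts --- the desired contradiction. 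With this substitution your proof closes; without it the degenerate case remains open.
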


\begin{proof}
  Suppose otherwise. 
  
  Suppose first that there is some $x^* \in \Z_p$ that satisfies
  property \eqref{eqn:prop}
  and $f(x^*) \neq 0$. By \eqref{eqn:fprop} this
  forces $f_i(x^*) \neq 0$. Now by \eqref{eqn:cB} there is, for
  each $n$, an $x_n \in \cB_n$ such that $x_n \equiv x^* \pmod{p^n}$.
  However, for $n$ large enough it is clear that $x_n$ satisfies 
  \eqref{eqn:prop} and so the algorithm would have stopped at the
  $n$-th step, and we have a contradiction.
  
  Next we shall suppose that $x^*$ satisfies property \eqref{eqn:prop}
  and $f(x^*)=0$. Hence precisely one of the $f_i(x^*)$ is zero.
  Without loss of generality suppose that $f_1(x^*)=0$. 
  Now $f^\prime(x^*) \neq 0$ as $f$ is separable. Let 
  $x^{**} =x^*+p^{2u} f^\prime(x^*)$ where $u$ is a large positive
  integer that will be chosen
  later. By Taylor's Theorem 
  \[
  f(x^{**})\equiv \left( p^u f^\prime(x^*) \right)^2 \pmod{p^{4u}}.
  \]
  This forces $f(x^{**})$ to be a non-zero square for $u$ large enough.
  Moreover, for $u$ large enough, $f_i(x^{**})$ is a non-zero square
  in $K_\fp$ for $2 \leq i \leq r$, since $f_i(x^*)$ is
  a non-zero square. By \eqref{eqn:fprop}, $f_1(x^{**})$ must also
  be a square. Thus $f(x^{**}) \neq 0$ and $x^{**}$
  satisfies \eqref{eqn:prop}. This reduces us to the previous
  case and we have a contradiction.
  
  We deduce that no $x^* \in \Z_p$ satisfies \eqref{eqn:prop}.
  Now choose $x_n \in \cB_n$
  for $n=0,1,2,\dots$ such that $x_{n+1} \equiv x_{n} \pmod{p^n}$.
  Let $x^*=\lim x_n \in \Z_p$. In particular $x^* \equiv x_n \pmod {p^n}$.
  Now either $f(x^*)$ is a non-square in $\Q_p$, 
  or $f_i(x^*)$ is a non-square in $K_\fp$ for some 
  $\fp$ above $p$ and some $1 \leq i \leq r$.
  
  Suppose that $f(x^*)$ is a non-square in $\Q_p$.
  Let $k=v_p(f(x^*)) < \infty$. Suppose $n>k$. Then $k=v_p(f(x_n))$. 
  Now either $k$ is odd, or $f(x_n)/p^k$ is not a square
  modulo $p^{n-k}$, for large enough $n$. In either case,
  $x_n$ satisfies the hypotheses of Lemma~\ref{lem:crit1},
  and cannot belong to $\cB_n$ giving a contradiction.
  Likewise we obtain
  a contradiction if $f_i(x^*)$ is a non-square in $K_\fp$.
\end{proof}


\section{Edwards' Parametrization}

The remainder of this paper is devoted to the proof of Theorem~\ref{thm:main}.
In this section, we use Edwards' parametrization of the generalized
Fermat equation with signature $(2,3,5)$.
This allows us to reduce the resolution of \eqref{eqn:main}
to the determination of rational points on $49$ hyperelliptic curves
of genus $14$; in determining the rational points on these
curves our partial descent will play a major r\^{o}le.
All our computations are carried out using the package
{\tt MAGMA} \cite{MAGMA}.

In the following, the notation $h=[\alpha_0,\alpha_1,\dotsc,\alpha_{12}]$ 
means 
that $h$ is the binary form 
\[ h(u,v)=\sum_{i=0}^{12} \binom{12}{i}\alpha_i u^{i}v^{12-i}. \]
We define binary forms $h_1, \ldots, h_{27}$ as given in Table~\ref{table}
on page~\pageref{table}.

\begin{table}
\begin{center}
\noindent\hrulefill

\smallskip

\begin{tabular}{l}
$h_1=[0,1,0,0,0,0,-144/7,0,0,0,0,-20736,0]$,
\\
$h_2=[-1,0,0,-2,0,0,80/7,0,0,640,0,0,-102400]$,
\\
$h_3=[-1,0,-1,0,3,0,45/7,0,135,0,-2025,0,-91125]$,
\\
$h_4=[1,0,-1,0,-3,0,45/7,0,-135,0,-2025,0,91125]$,
\\
$h_5=[-1,1,1,1,-1,5,-25/7,-35,-65,-215,1025,-7975,-57025]$,
\\
$h_6=[3,1,-2,0,-4,-4,24/7,16,-80,-48,-928,-2176,27072]$,
\\
$h_7=[-10,1,4,7,2,5,80/7,-5,-50,-215,-100,-625,-10150]$,
\\
$h_8=[-19,-5,-8,-2,8,8,80/7,16,64,64,-256,-640,-5632]$,
\\
$h_9=[-7,-22,-13,-6,-3,-6,-207/7,-54,-63,-54,27,1242,4293]$,
\\
$h_{10}=[-25,0,0,-10,0,0,80/7,0,0,128,0,0,-4096]$,
\\
$h_{11}=[6,-31,-32,-24,-16,-8,-144/7,-64,-128,-192,-256,256,3072]$,
\\
$h_{12}=[-64,-32,-32,-32,-16,8,248/7,64,124,262,374,122,-2353]$,
\\
$h_{13}=[-64,-64,-32,-16,-16,-32,-424/7,-76,-68,-28,134,859,2207]$,
\\
$h_{14}=[-25,-50,-25,-10,-5,-10,-235/7,-50,-49,-34,31,614,1763]$,
\\
$h_{15}=[55,29,-7,-3,-9,-15,-81/7,9,-9,-27,-135,-459,567]$,
\\
$h_{16}=[-81,-27,-27,-27,-9,9,171/7,33,63,141,149,-67,-1657]$,
\\
$h_{17}=[-125,0,-25,0,15,0,45/7,0,27,0,-81,0,-729]$,
\\
$h_{18}=[125,0,-25,0,-15,0,45/7,0,-27,0,-81,0,729]$,
\\
$h_{19}=[-162,-27,0,27,18,9,108/7,15,6,-51,-88,-93,-710]$,
\\
$h_{20}=[0,81,0,0,0,0,-144/7,0,0,0,0,-256,0]$,
\\
$h_{21}=[-185,-12,31,44,27,20,157/7,12,-17,-76,-105,-148,-701]$,
\\
$h_{22}=[100,125,50,15,0,-15,-270/7,-45,-36,-27,-54,-297,-648]$,
\\
$h_{23}=[192,32,-32,0,-16,-8,24/7,8,-20,-6,-58,-68,423]$,
\\
$h_{24}=[-395,-153,-92,-26,24,40,304/7,48,64,64,0,-128,-512]$,
\\
$h_{25}=[-537,-205,-133,-123,-89,-41,45/7,41,71,123,187,205,-57]$,
\\
$h_{26}=[359,141,-1,-21,-33,-39,-207/7,-9,-9,-27,-81,-189,-81]$,
\\
$h_{27}=[295,-17,-55,-25,-25,-5,31/7,-5,-25,-25,-55,-17,295]$.
\\
\end{tabular}

\smallskip

\noindent\hrulefill

\medskip

\caption{Definition of the forms $h_i$, $1 \le i \le 27$. \label{table}}
\end{center}
\end{table}

For $i=1,\dots,27$, let
\[
 g_i=\frac{1}{132^2}
\left(
 \frac{\partial^2{h_i}}{\partial{u}^2} 
 \frac{\partial^2{h_i}}{\partial{v}^2}	- 
 \frac{\partial^2{h_i}}{\partial{u}\partial{v}} 
	\frac{\partial^2{h_i}}{\partial{u}\partial{v}} 
\right),
\qquad
f_i=\frac{1}{240}
\left(
 \frac{\partial{h_i}}{\partial{u}} 
  \frac{\partial{g_i}}{\partial{v}}-	
  \frac{\partial{h_i}}{\partial{v}} 
 \frac{\partial{g_i}}{\partial{u}} 
\right).
\]
Let
\begin{gather*}
(f_{i},g_{i},h_{i})=(-f_{i-27},g_{i-27},h_{i-27}),
\qquad i=28,29,\\
(f_{i},g_{i},h_{i})=(-f_{i-25},g_{i-25},h_{i-25}),
\qquad i=30,\dots,41,\\
(f_{i},g_{i},h_{i})=(-f_{i-23},g_{i-23},h_{i-23}),
\qquad i=42,\dots,49.\\
\end{gather*}
Note that the $f_i$, $g_i$ and $h_i$ are binary
forms with integral coefficients,
of degrees $30$, $20$ and $12$ respectively.

\begin{thm} (Edwards \cite{Ed})
Suppose $a$, $b$, $c$ are coprime rational integers satisfying
$a^2+b^3+c^5=0$. Then for some $i=1,\dotsc,49$, there
is a pair of coprime rational integers $u$, $v$ such that
\[
a=f_i(u,v), \qquad b=g_i(u,v), \qquad c=h_i(u,v).
\]
\end{thm}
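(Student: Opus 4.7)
The plan is to prove this via classical icosahedral invariant theory combined with Galois descent. First, recall the classical setup: the binary icosahedral group $\tilde{A}_5 \subset \SL_2(\C)$, a perfect double cover of $A_5 \cong \mathrm{PSL}_2(\F_5)$, acts on the space of binary forms, and its ring of invariants is freely generated by three forms $F, G, H$ of degrees $30, 20, 12$ subject to the single Klein syzygy $F^2 + G^3 + H^5 = 0$ after appropriate normalisation. Thus the quotient $\PP^1/A_5$ is naturally realised as the weighted projective curve $\{X^2 + Y^3 + Z^5 = 0\} \subset \PP(30,20,12)$, with quotient morphism $\pi : \PP^1 \to \PP(30,20,12)$ of degree~$60$.

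Next, interpret a coprime integral solution $(a,b,c)$ of $a^2+b^3+c^5=0$ as a rational point on this quotient curve and pull back along~$\pi$ to obtain an $A_5$-torsor over $\operatorname{Spec}\Q$, i.e.\ a Galois $A_5$-extension of~$\Q$. A careful local analysis at the primes of bad reduction of~$\pi$ shows that coprimality forces this extension to be unramified outside $\{2, 3, 5\}$: at any other prime~$p$, one of $a, b, c$ must be a unit, and the local fibre is then étale. The Hermite--Minkowski theorem (or a direct enumeration) then guarantees that only finitely many such extensions of~$\Q$ exist, and an explicit class-field-theoretic calculation produces a finite list of twists in $H^1(\Gal(\Qbar/\Q), A_5)$.

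Each such extension corresponds to a twist of the Klein cover, and exhibiting each twist explicitly gives a $\Q$-morphism $\PP^1 \to \{x^2+y^3+z^5=0\}$, or equivalently a triple $(f_i, g_i, h_i)$ of binary forms of degrees $(30,20,12)$ over~$\Z$ with $f_i^2 + g_i^3 + h_i^5 \equiv 0$. Conversely, every primitive solution lies in the image of exactly one such twisted parametrization --- the one whose torsor class it realises --- since the fibre of $\pi$ over $(a:b:c)$ determines both the extension and the rational preimage up to the action of~$\tilde{A}_5$. The numerology behind the $49$ triples reflects the involution $(a,b,c) \mapsto (-a,b,c)$ on solutions, which produces an additional triple whenever the corresponding twist is not already equivalent to its negative under a linear reparametrization of~$(u:v)$; this explains why the $27$ base forms yield $49$ entries in total, with $22$ of them appearing as sign-flipped duplicates (indices $28$--$49$).

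The principal obstacle is the explicit classification of the $\tilde{A}_5$-extensions of~$\Q$ unramified outside $\{2,3,5,\infty\}$ and their realisation as twists of Klein's map in concrete integral coordinates. This requires navigating $H^1(\Gal(\Qbar/\Q), \tilde{A}_5)$ with restricted ramification, identifying local components at $2, 3, 5$ via resolvent polynomials, and, for each extension, producing an equivariant $\Q$-rational map from $\PP^1$ that descends to an explicit twisted parametrization. Once this enumeration is complete and the table is in hand, the identities $f_i^2 + g_i^3 + h_i^5 = 0$ and the exhaustiveness of the family are straightforward verifications that can be carried out mechanically in {\tt MAGMA}.
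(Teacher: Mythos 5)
You have correctly identified the strategy of the source the paper relies on: the paper itself does not prove this theorem (its ``proof'' is a pointer to pages 235--236 of Edwards' article), and Edwards' argument is indeed built on Klein's icosahedral covering $\PP^1 \to \PP^1/A_5$ of degree $60$, the syzygy among the invariants of degrees $12, 20, 30$, and a classification of twists whose ramification is forced by coprimality to be concentrated at $\{2,3,5\}$ (plus infinity). So the skeleton is right. But as a proof the proposal has real gaps, and the most serious one is that it proves (at best) a weaker, purely rational statement than the one asserted. The theorem says: there exist \emph{coprime integers} $u,v$ and forms $f_i,g_i,h_i$ with \emph{integer} coefficients such that $(a,b,c)=(f_i(u,v),g_i(u,v),h_i(u,v))$ exactly. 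Twisting over $\Q$ via $H^1(\Gal(\Qbar/\Q),A_5)$ only yields, for each cohomology class, a parametrizing map defined up to $\PGL_2(\Q)$-equivalence and a point $(u:v)\in\PP^1(\Q)$; passing from this to coprime integer parameters forces you to choose integral models, classify the resulting Klein forms up to $\GL_2(\Z)$-equivalence (a single $\Q$-class can split into several $\Z$-classes --- this is where the count $27$ actually comes from), and control content and scaling at $2,3,5$ so that the syzygy is exactly $f_i^2+g_i^3+h_i^5=0$ with no extraneous constant and so that coprime $(a,b,c)$ are hit by coprime $(u,v)$. This reduction theory over $\Z$ is the bulk of Edwards' work and cannot be dismissed as a mechanical verification.

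Two further gaps: first, a twist of the covering need not have source $\PP^1$ --- it is a priori a conic --- and you slide between $\tilde{A}_5\subset\SL_2$ and $A_5\subset\PGL_2$ without addressing the lifting obstruction; one must argue that the twists actually receiving a point above a primitive solution are trivialized (so that binary forms in $(u,v)$ of degrees $30,20,12$ make sense at all). Second, your closing claim inverts the difficulty: checking $f_i^2+g_i^3+h_i^5=0$ for the listed forms is the easy, mechanical part, whereas \emph{exhaustiveness} is the theorem itself; it requires a provably complete enumeration of the relevant $A_5$-extensions (equivalently, quintic fields) unramified outside $\{2,3,5\}$ with the correct archimedean behaviour, followed by the integral reduction above, and it also needs separate care for solutions with $abc=0$, which lie over the branch points where the torsor description degenerates. (The assertion that each primitive solution arises from \emph{exactly one} parametrization is likewise unsubstantiated, though it is not needed for the statement.) As written, then, the proposal is a correct road map to Edwards' proof rather than a proof: the steps deferred as ``straightforward'' are precisely where the content lies.
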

\begin{proof}
See pages 235--236 of \cite{Ed}, particularly the last paragraph on
page 236.
\end{proof}


\section{Local Solubility}

\begin{lem}
Suppose $x$, $y$, $z$ are coprime integers satisfying 
equation~\eqref{eqn:main}. Then, for some $i$ in
\begin{equation}\label{eqn:I}
I=\{ 2, 3, 5, 6, 15, 16, 17, 23, 24, 27, 28, 29, 31, 
32, 36, 37, 38, 40, 41, 43, 
44, 47, 49
\}
\end{equation}
there is a pair of coprime integers $u$, $v$, such that
\begin{equation}\label{eqn:param}
y^2=f_i(u,v), \qquad x=g_i(u,v), \qquad z=h_i(u,v).
\end{equation}
\end{lem}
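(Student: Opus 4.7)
The plan is to first invoke Edwards' theorem, and then eliminate the $26$ extraneous indices by local considerations. Rewrite the equation $x^3+y^4+z^5=0$ as $(y^2)^2 + x^3 + z^5 = 0$ and set $a = y^2$, $b = x$, $c = z$. Since $\gcd(x,y,z)=1$ and any prime dividing $y^2$ divides $y$, the triple $(a,b,c)$ is coprime, so Edwards' theorem applies and produces some $i \in \{1,\ldots,49\}$ together with coprime integers $u,v$ satisfying $y^2 = f_i(u,v)$, $x = g_i(u,v)$, $z = h_i(u,v)$. This is exactly the parametrization~\eqref{eqn:param}, so the only task is to show that the $26$ indices $i \in \{1,\ldots,49\} \setminus I$ cannot occur.

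For each such $i$, I would show that the equation $y^2 = f_i(u,v)$ admits no pair $(u,v) \in \Z_p^2$ with $\min(v_p(u), v_p(v)) = 0$ for some conveniently chosen prime~$p$ (including possibly $p = \infty$). This is a purely local obstruction, which suffices because the coprime integers $u,v$ produced by Edwards' theorem give, in particular, a primitive $\Z_p$-pair for every~$p$. To carry out the check for a given $i$ and $p$, cover $\PP^1(\Q_p)$ by the two affine patches $(u:v) = (x:1)$ and $(u:v) = (1:px)$ with $x \in \Z_p$, and on each patch run the refinement procedure from the end of Section~\ref{S:pdesc1} with the single polynomial $g = f_i$ over $\Q$, using Lemma~\ref{lem:crit1} to discard residue classes $x \bmod p^n$ for which $f_i$ cannot represent a square. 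The obstruction is witnessed by termination with $\cB_n = \emptyset$ after finitely many steps.

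The main obstacle is purely computational: one has to find, for each of the $26$ excluded indices, a prime at which such an obstruction exists, and then verify the obstruction. In practice small primes (such as $2, 3, 5, 7$) suffice, since the forms~$f_i$ have relatively small integer coefficients, and the verification is a straightforward {\tt MAGMA} calculation using the algorithm of Section~\ref{S:pdesc1}. No new theoretical input is needed beyond the local solvability test already developed; the statement of the lemma simply records the outcome of this finite enumeration, namely that precisely the $23$ indices listed in~$I$ survive local scrutiny.
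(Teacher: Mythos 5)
Your opening step (applying Edwards' theorem with $(a,b,c)=(y^2,x,z)$, using $\gcd(y^2,x,z)=1$) is exactly how the paper proceeds, and local insolubility of the curves $C_i : y^2=f_i(u,v)$ does eliminate many indices: the paper finds $16$ indices whose curves have no $2$-adic points and two more ($i=20,42$) with no $3$-adic points. But this accounts for only $18$ of the $26$ indices outside~$I$, and here your plan has a genuine gap. For the remaining eight indices, $i=7,8,12,19,21,22,30,34$, the curves $C_i$ are everywhere locally soluble, so there is no prime $p$ (nor $p=\infty$) at which $y^2=f_i(u,v)$ fails to have a primitive $\Z_p$-solution; the refinement procedure of Section~\ref{S:pdesc1} applied with the single polynomial $f_i$ would simply terminate by exhibiting a local point. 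Enlarging the set of primes searched (even up to the Hasse--Weil bound plus the bad primes) cannot help, because the obstruction you are looking for does not exist for these eight curves, and so your finite enumeration would end with $31$ surviving indices, not the $23$ in~$I$.

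What is missing is that the hypothesis gives more than a local point on $C_i$ with $u,v$ coprime: the full triple $(x,y,z)=(g_i(u,v),y,h_i(u,v))$ must be coprime. The paper exploits this extra information as follows. Working modulo $2^8=256$, one checks for each of these eight indices that every pair $(\overline{u},\overline{v})\in(\Z/256\Z)^2$ for which $f_i(\overline{u},\overline{v})$ is a square modulo $256$ has $f_i(\overline{u},\overline{v})$, $g_i(\overline{u},\overline{v})$ and $h_i(\overline{u},\overline{v})$ all even. Hence an integer solution of~\eqref{eqn:param} with such an $i$ would force $y$, $x$ and $z$ all to be even, contradicting the coprimality of $(x,y,z)$ --- even though $C_i(\Q_2)\neq\emptyset$. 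Your proposed test only ever inspects $f_i$ and the primitivity of $(u,v)$, never the values $g_i(u,v)$ and $h_i(u,v)$, so it cannot recover this argument; without it the lemma, with exactly the $23$ indices listed in~$I$, is not established.
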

\begin{proof}
From Edwards' Theorem, the conclusion certainly holds for
some $1\leq i \leq 49$. It turns out that for all $i$
the form $f_i$ is square-free and so the equation
$y^2=f_i(u,v)$ defines a hyperelliptic curve 
\[
C_i : y^2=f_i(u,v)
\]
in weighted projective space, where we give $u$, $v$ and $y$
the weights $1$, $1$ and $15$. This hyperellipic curve has
genus $g=14$ since the binary form $f_i$ has degree $30$. 
We tested each $C_i$ for everywhere local solubility using
our implementation of the
algorithm in \cite{MSS}. By the Hasse-Weil bounds,
it is only necessary to test for local solubility at
$\infty$, the primes dividing the discriminant of $f_i$,
and those $<4g^2$. We find that for $i$ in the set
\[
\{1,4,9,10,11,13,14,18,25,26,33,35,39,45,46,48\}
\]
the curve $C_i$ has no $2$-adic points and for
$i=20$, $42$, it has no $3$-adic points. 

We can eliminate
a further eight indicies $i$ as follows. 
Let $S=\{\overline{w}^2 : \overline{w} \in \Z/{256 \Z}\}$ and
$T=\{2\overline{w}: \overline{w} \in \Z/{256 \Z}\}$. Let
\[
U_i=\{(\overline{u},\overline{v}) \in (\Z/{256 \Z})^2 : 
f_i(\overline{u},\overline{v}) \in S, \quad (f_i(\overline{u},
\overline{v}),g_i(\overline{u},\overline{v}),h_i(\overline{u},
\overline{v})) \notin T^3\}.
\] 
If $U_i=\emptyset$ then for any pair of integers $u$, $v$,
if $f_i(u,v)$ is a square then the integers $f_i(u,v)$,
$g_i(u,v)$, $h_i(u,v)$ must all be even and so cannot be
coprime. It turns out that $U_i=\emptyset$ for
$i=7,8,12,19,21,22,30,34$, and so we can eliminate these
indicies from consideration. This leaves us with the set $I$
in the statement of the theorem. 

Our attempts to eliminate other indicies using the corresponding
trick with other prime powers were unsuccessful.
\end{proof}

\bigskip

\noindent {\bf Remark.}
In what follows we will determine the rational points on the
curves $C_i$ for the $23$ values of $i \in I$. There are various
relations between these $23$ curves which
are helpful to bear in mind, even though we shall not use them explicitly.
First, the curves
$C_3$, $C_{17}$ and $C_{47}$ are isomorphic. Secondly, if we
write $i \sim j$ to mean that $C_i$ is a quadratic twist
of $C_j$ then we have
\begin{gather*}
5 \sim 31 \sim 49,
\qquad
6 \sim 32,
\qquad
15 \sim 16, \\
23 \sim 24, \qquad
27 \sim 28 \sim 37 \sim 38,
\qquad
43 \sim 44.
\end{gather*}


\section{Factorization Types}
Let $G \in \Q[u,v]$ be a binary form, and let $K$ be
a number field. We say $G$ has {\em factorization type} 
$[d_1,d_2,\dotsc,d_n]$  over~$K$ if it factors as a product 
$G=G_1 G_2 \dots G_n$ where $G_j \in K[u,v]$ is irreducible over~$K$
 of degree~$d_j$. The following table records the 
factorization types of~$f_i$ over~$\Q$ for the $23$ values of~$i \in I$.

\begin{center}
\begin{tabular}{|c|c|}
\hline
factorization type of $f_i$ over $\Q$ & $i \in I$ \\
\hline\hline
$[30]$ &  $15, 16, 23, 24, 27, 28, 29, 37, 38, 40, 41, 43, 44$\\
\hline
$[ 10, 20 ]$ & $2, 36$ \\
\hline
$[ 6, 12, 12 ]$ & $3, 17, 47$\\
\hline
$[ 1, 1, 4, 4, 4, 8, 8 ]$ & $5, 6, 31, 32, 49$ \\
\hline
\end{tabular}
\end{center}

\vskip 5mm

We implemented our partial Descent in {\tt MAGMA}, and used
it to deal with the remaining $f_i$ as we now explain.

\subsection{Dealing with factorization type $[30]$}

Here the $f_i$ are irreducible and it is impractical
to compute the class group and units of the degree $30$
number fields  
$\Q[x]/f_i$. It follows from Edwards' construction that the
Galois group of the splitting field of any of the $f_i$ (or $g_i$
or $h_i$) must be isomorphic to a subgroup of $\GL_2(\F_5)/\{\pm I\}$. 
In these $13$
cases where $f_i$ is irreducible, it turns out that the
Galois group is isomorphic to $\GL_2(\F_5)/\{\pm I\}$,
which has order $240$. Now
$\GL_2(\F_5)/\{\pm I\}$ has a subgroup of order $48$
and hence index $5$. By the Galois correspondence, the splitting
field of $f_i$ must contain a subfield $K_i$ of degree $5$. 
It is possible to determine for these fields $K_i$
the class group and unit information needed for the
Selmer set computation. 
It turns out that $\Sel_\fake(C_i,K_i,\cA_i)=\emptyset$
where $\cA_i$ is the primes $<100$, infinity
and the primes dividing the leading coefficient of $f_i$. This shows that
$C_i(\Q)=\emptyset$ for $i=15, 16, 23, 24, 27, 28, 29, 
37, 38, 40, 41, 43, 44$. We briefly indicate in this table
the choice of $K_i$. In all these cases, $f_i$ has factorization
type $[6,24]$ over $K_i$. (It can be checked that the Galois group
of the coverings one would obtain is isomorphic to~$\mu_2^4$, so
we have $g_D = 16 (g - 1) + 1 = 209$.)

\vskip 5mm

\begin{center}
\begin{tabular}{|c|c|}
\hline
$i \in I$  & Defining polynomial for $K_i$ \\
\hline\hline
$15, 16, 23, 24$ &  $x^5 - 10 x^2 - 15 x - 6$  \\
\hline
$27,28,37,38,43,44$ & $x^5 + 20x^2 + 30x + 60$  \\
\hline
$29$ & $ x^5 + 30x^2 + 45x + 18$ \\
\hline
$40$ & $x^5 + 20x^2 + 30x + 6$ \\
\hline
$41$ & $x^5 + 30x^3 + 60x^2 + 45x + 12$ \\
\hline\hline
\end{tabular}
\end{center}

\vskip 5mm

For the remaining factorization types
 we let $K=\Q$ and computed the Selmer  
set $\Sel_\fake(C_i,\Q,\cA_i)$  
where  again
$\cA_i$ is the primes $<100$, infinity
and the primes dividing the leading coefficient of $f_i$. 
In all these cases the Selmer set is non-empty.
However, it turns out that each  unramified cover $D_h$ 
corresponding to an element 
$h$ of these Selmer sets has at least one quotient 
$D_h \rightarrow D^\prime/\Q$,
such that:
\begin{enumerate}
\item[(i)] $D^\prime$ is a curve of genus $1$, and its Jacobian
has rank $0$, or
\item[(ii)] $D^\prime$ is a curve of genus $2$, and its Jacobian
has rank at most $1$.
\end{enumerate}
In either case we have been able determine $D^\prime(\Q)$
(where for (ii) we use Chabauty's method \cite{Flynn},
\cite{MP},\cite{We}). This allows us to determine
the rational points on the $D_h$, and hence on the $C_i$.
We give some details below.

\subsection{Dealing with factorization type $[10,20]$}

Here $i=2$ or $36$. We explain the details for $i=2$; those
for $i=36$ are practically identical. 
Here $f_{2}=F_1 F_2$ where
\begin{align*}
F_1 &= 20736 u^{10} + v^{10} \\
F_2 & = 429981696 u^{20} + 1558683648 u^{15} v^5 - 207484416 u^{10} v^{10} - 75168 u^5 v^{15} + v^{20} . \\
\end{align*}
The Selmer set is 
\[
\Sel_\fake(C_2,\Q,\cA_2)=\left\{ \left( 1\cdot (\Q\mult)^2,1 \cdot (\Q\mult)^2\right) 
\right\}. 
\]
Thus if $(u:v:y) \in C_2(\Q)$ then $F_1(u,v)$ and $F_2(u,v)$ are both
squares. In other words, every rational point $(u:v:y) \in C_2(\Q)$
lifts to a rational point $(u:v:y_1:y_2)$ on the curve
\[
D : \left\{
\begin{array}{l}
F_1(u,v) =  y_1^2,\\
F_2(u,v) =  y_2^2,
\end{array}
\right.
\]
via the map 
\[
\phi : D \rightarrow C_{2},
\qquad (u:v:y_1:y_2) \mapsto (u:v: y_1 y_2).
\]
However, the curve
$D$ covers the genus $2$ curve (given here in affine coordinates)
\[
D^\prime : Y^2=X^5+20736,
\]
via 
\[
\psi : (u: v : y_1 : y_2) \mapsto (X,Y)=\left(\frac{v}{u},\frac{y_1}{u^5}\right).
\]
To determine $D(\Q)$ and hence $C_{2}(\Q)$ it is enough
to determine 
$D_2^\prime(\Q)$. Write $J$ for the Jacobian of $D^\prime$.
Using the in-built {\tt MAGMA} routines for descent 
on Jacobians of genus $2$ curves (based on \cite{Stoll}) we were able to
show that 
$J(\Q) \cong \Z/5\Z$. From this one can easily conclude
that $D^\prime(\Q)=\{\infty,(0,\pm 144)\}$. Thus 
\begin{align*}
D(\Q) & =\{(0:1: \pm 1: \pm 1), (1:0: \pm 144: \pm 20736)\}, \\
C_2(\Q) & =\{(0:1:\pm 1), (1:0:\pm 2985984)\}.
\end{align*}
From \eqref{eqn:param} we obtain the following solutions to \eqref{eqn:main}
\[
(x,y,z)=(-1,\pm 1,0), \qquad (-429981696,\pm 2985984,0).
\]
We can exclude the latter pair since we are only interested in
solutions where $x$, $y$, $z$ are coprime.

In this case, we obtain double covers of genus $g_D = 2 \cdot 13 + 1 = 27$,
so $4 g_D^2 = 2916$, and the exact computation of the fake Selmer sets
would be feasible.

\subsection{Dealing with factorization type $[6,12,12]$}

Here $i=3$, $17$ or $47$. The Selmer sets for all have size $1$.
It is unsurprising that all three have the same size
Selmer set since, as we have observed before,
the curves $C_3$, $C_{17}$ and $C_{47}$ are isomorphic.
We give the details for $i=3$ here; the other cases are almost
identical. We can write $f_3=F_1 F_2 F_3$ where
\begin{align*}    
F_1 &=320u^6 + v^6,\\
F_2 &=102400u^{12} + 32000 u^9 v^3 + 16440 u^6 v^6 - 100 u^3 v^9 + v^{12},\\
F_3 &=102400 u^{12} + 896000 u^9 v^3 - 140160 u^6 v^6 - 2800 u^3 v^9 + v^{12}. 
\end{align*}
The Selmer set is 
\[
\Sel_\fake(C_3,\Q,\cA_3)=\left\{  
\left( 1\cdot (\Q\mult)^2,1 \cdot (\Q\mult)^2, 1 \cdot (\Q\mult)^2 \right) 
\right\}. 
\]
As before, every rational point $(u:v:y) \in C_3(\Q)$
lifts to a rational point $(u:v:y_1:y_2:y_3)$ on the curve
\[
D : \left\{
\begin{array}{l}
F_1(u,v) =  y_1^2,\\
F_2(u,v) =  y_2^2,\\
F_3(u,v) = y_3^2,\\
\end{array}
\right.
\]
via the map 
\[
\phi : D \rightarrow C_{3},
\qquad (u:v:y_1:y_2:y_3) \mapsto (u:v: y_1 y_2 y_3).
\]
However, the curve
$D$ covers the elliptic curve 
\[
E : Y^2=X^3+25
\]
via 
\[
\psi : D \rightarrow E, \qquad 
(u:v:y_1:y_2:y_3) \mapsto 
\left(
\frac{20 u^2}{v^2},\frac{5y_1}{v^3}
 \right).
\]
The curve $E$ has rank $0$ and the Mordell--Weil group is
\[
E(\Q)=\{ O, (0,5), (0,-5) \}.
\]
We deduce that the only rational points on $C_3$ are
$(u:v:y)=(0:1:\pm 1)$. These give the solution $(0,1,-1)$
to equation \eqref{eqn:main}.

\subsection{Dealing with factorization type $[1,1,4,4,4,8,8]$}

Here $i=5,6,31,32,49$. In all these cases the Selmer set has exactly
two elements. We give the details for $i=5$; the other cases
are similar. Now $f_5=2 F_1 F_2 F_3 F_4 F_5 F_6 F_7$ where 
\begin{align*}
    F_1 & =v, \qquad F_2 =u, \qquad F_3  = 45 u^4 - v^4, \\
	F_4 &= 405 u^4 + 30 u^2 v^2 + v^4, \qquad
    F_5  =15 u^4 + 10 u^2 v^2 + 3 v^4,\\
    F_6 &= 405 u^8 - 540 u^6 v^2 + 846 u^4 v^4 - 60 u^2 v^6 + 5 v^8,\\
    F_7 & = 50625 u^8 - 13500 u^6 v^2 + 4230 u^4 v^4 - 60 u^2 v^6 + v^8. 
\end{align*}
The Selmer set has representatives 
       $( 3, 2, 5, 5, 15, 5, 1)$ and
        $(5, -6, -1, 1, 3, 5, 1)$.
If $(u:v:y)$ is a rational point on $C_5$ mapping to the first element
of the Selmer set 
then there are rational numbers $a$, $y_1,\dots,y_7$, with
$a \neq 0$, such that
\begin{align*}
    F_1 & =3 a y_1^2, \qquad F_2 =2 a y_2^2, \qquad F_3  = 5 y_3^2, \\
	F_4 &= 5 y_4^2, \qquad
    F_5  = 15 y_5^2, \qquad
    F_6 = 5 y_6^2, \qquad
    F_7  = y_7^2. 
\end{align*}
Consider the curve 
\[
D^\prime : F_3(u,v)=5 y_3^2.
\]
The Jacobian of this genus $1$ curve is the elliptic curve
\[
E : y^2 = x^3 + 4500x,
\]
which has rank $0$ and Mordell--Weil group $E(\Q)=\{O,(0,0)\}$.
It follows that $D^\prime(\Q)=\{ (1:0:3) , (1:0:-3) \}$.
This gives us the solution $(184528125,0,-91125)$ to \eqref{eqn:main}
which we can exclude since we are only interested in primitive solutions. 
We deal with the second element of the Selmer set in a similar way.


\end{document}